\DeclareMathOperator{\dom}{dom}
\newcommand{\R}{{\mathbf R}}
\newcommand{\E}{{\mathbf E}}
\newcommand{\Exp}{{\mathbf E}}
\newcommand{\Prob}{{\mathbf P}}
\newcommand{\bp}{\begin{proof}[\textbf{Solution}]}
\newcommand{\ep}{\end{proof}}
\newcommand{\eqdef}{\overset{\text{def}}{=}}
\newcommand{\xbi}{x^{(i)}}
\newcommand{\ii}{^{(i)}}
\newcommand{\lmax}{L'}
\newcommand{\ve}[2]{\langle #1 , #2 \rangle}
\newtheorem{theorem}{Theorem}
\newtheorem{lemma}[theorem]{Lemma}
\newtheorem{proposition}[theorem]{Proposition}
\newtheorem{definition}[theorem]{Definition}
\begin{document}

\title{Separable Approximations and Decomposition Methods\\  for the Augmented Lagrangian}

\author{Rachael Tappenden \qquad Peter Richt\'{a}rik \qquad  Burak B\"{u}ke \footnote{All authors: James Clerk Maxwell Building, School of Mathematics, The University of Edinburgh, United Kingdom. The work of all three authors was supported by the EPSRC grant EP/I017127/1 (Mathematics for Vast Digital Resources). The work of PR and RT was also partially supported by the Centre for Numerical Algorithms and Intelligent Software (funded by EPSRC grant EP/G036136/1 and the Scottish Funding Council).}}

\date{August 30, 2013}

\maketitle

\begin{abstract}In this paper we study decomposition methods based on separable approximations for minimizing the augmented Lagrangian. In particular, we study and compare the  Diagonal Quadratic Approximation Method (DQAM)  of Mulvey and Ruszczy\'{n}ski \cite{Mulvey92} and the Parallel Coordinate Descent Method (PCDM) of Richt\'{a}rik and Tak\'{a}\v{c} \cite{Richtarik12a}. We show that the two methods are equivalent for feasibility problems up to the selection of a single step-size parameter. Furthermore, we prove an improved complexity  bound for PCDM under strong convexity, and show that this bound is at least $8(L'/\bar{L})(\omega-1)^2$ times better than the best known bound for DQAM, where $\omega$ is the degree of partial separability and $L'$ and $\bar{L}$ are the maximum and average of the block Lipschitz constants of the gradient of the quadratic penalty appearing in the augmented Lagrangian.
\end{abstract}

\section{Introduction}

With the rise and ubiquity of digital and data technology, practitioners in nearly all industries need to solve optimization problems of increasingly larger sizes. As a consequence,  new tools and methods are required to solve these big data problems, and to do so efficiently.

In this work, we are concerned with  convex optimization problems with an objective function that is separable into blocks of variables and where these blocks are linked by a subset of constraints which nevertheless make the problem nonseparable. Nonseparability is a source of difficulty in solving these very large optimization problems. This structure is particularly relevant in stochastic optimization problems where each block relates to a certain scenario and involves only variables related to that particular scenario. The objective function expressed as an expectation is separable in these blocks and the linking constraints (called non-anticipativity constraints)  encode the natural requirement that decisions be based only on information available at the time of decision making. Applications that can be modeled as large scale stochastic optimization problems include multicommodity network flow problems, financial planning problems and airline routing.

A classical approach to solving such problems is to use the augmented Lagrangian by relaxing the linking constraints. The augmented Lagrangian idea was first introduced independently by Hestenes \cite{hestenes69} and Powell \cite{powell72} and convergence of the associated augmented Lagrangian method was established later by Rockafellar \cite{rockafellar73, rockafellar76}. Advantages of this approach include the simplicity and stability of the multiplier iterations, the possibility of starting from an arbitrary multiplier, and the fact that there is no master problem to solve. However, the augmented Lagrangian is nonseparable, so the problem is still difficult to solve.

The nonseparability of the augmented Lagrangian has motivated the development of decomposition techniques. In an early work, Stephanopoulos and Westerberg \cite{stepwest75} suggest decomposing the augmented Lagrangian using linear approximations and Watanabe et al.\ \cite{watnismat78} use a transformation method to deal with the nonseparable cross products. The progressive hedging algorithm of Rockafellar and Wets \cite{Rockafellar91} also aims to tackle the nonseparability of the augmented Lagrangian. In a more recent line of work,  Ruszczy\'{n}ski \cite{Ruszczynski89, Ruszczynski95} and Mulvey and Ruszczy\'{n}ski \cite{Mulvey92, Mulvey95} propose and analyze a diagonal quadratic approximation (DQA) to the augmented Langrangian and an associated diaginal quadratic approximation method (DQAM). By approximating  the original problem by one that is separable into blocks, these techniques make a significant difference in terms of solvability because the problem is broken down into a number of problems of a more manageable size. Decomposition techniques have become even more attractive with the advances in parallel computing: since the decomposed subproblems can be solved independently,  parallelism is possible and this leads to acceleration.

A recent development in the area of decomposition techniques is the Expected Separable Overapproximation (ESO) of Richt\'arik and Tak\'{a}\v{c} and the associated parallel coordinate descent method (PCDM) presented in \cite{Richtarik12a} (this is discussed in detail in Section~\ref{S_ESOM}).

(Block) coordinate descent methods, early variants of which can be traced back to a 1870 paper of Schwarz \cite{Schwarz1870} and beyond, have recently become very popular due to their low per-iteration cost and good scalability properties. While convergence results were established several decades ago, iteration complexity bounds were not studied until recently \cite{Tseng01}. Randomized coordinate and block coordinate descent methods were proposed and analyzed in several settings, such as for smooth convex minimization problems \cite{Nesterov12, Richtarik12, RT:SPARS11a}, $L_1$-regularized problems \cite{ShalevTewari09},  composite problems \cite{Lu-Xiao2013,Richtarik12, TRG:Inexact2013}, nonsmooth convex problems \cite{Fercoq-Richtarik:SmoothedPCDM-2013}, nonconvex problems  \cite{Lu-Xiao2013b,Necoara13-nonconvex} and problems with separable constraints  \cite{Necoara12, Necoara_composite-coupled}. Parallel coordinate descent methods were developed and analyzed in  \cite{Bradley:PCD-paper, Richtarik12a, ChicagoICML13, Fercoq:Adaboost-2013, SSS2013-accelerated}, primal-dual methods in \cite{SSS2013, ChicagoICML13} and inexact methods in  \cite{TRG:Inexact2013}. The methods are used in a number of applications, including linear classification \cite{Lin:2008:DCDM,   Blondel2013, ChicagoICML13}, compressed sensing \cite{Li:CDOMACSGA}, truss topology design \cite{RT:TTD2011}, solving linear systems of equations \cite{2MIT2013} and group lasso problems \cite{Qin10}.

% Lin:2010:COMS, Lin:IEEEsurvey,

% inexact papers \cite{Cassioli-Inexact-2013,  Hua12, Bonettini11}

%; Wu-Lange \cite{Wu-Lange-2008}.

%The size of optimization problems is growing rapidly and novel algorithms are required to handle data of this magnitude, which has led to a revived interest in first order methods; in particular (block) coordinate descent (CD) methods. CD methods seem a natural choice for these very large scale problems because their memory requirements and per-iteration computational cost are low. Furthermore, they are often designed to take advantage of the underlying structure of the optimization problem \cite{Tseng09, Wright12}, many of these algorithms are supported by high probability iteration complexity results \cite{Nesterov12,Richtarik12,Richtarik12a}. Moreover, coordinate descent methods can take advantage of the various computer architectures currently available, with parallel variants \cite{Richtarik12a}, distributed computing variants (\todo{cite or combine with previous}), and asynchronous versions (\todo{cite Hogwild etc}).

\subsection{Augmented Lagrangian} \label{S_AL}

Our work is motivated by the need to solve huge scale instances of constrained convex optimization problems of the form
\begin{subequations}
\label{Eq_problem}
\begin{eqnarray}
\label{Eq_original_objective}
     &\displaystyle\min_{x^{(1)},\dots, x^{(n)}}&  \sum_{i=1}^n g_i(\xbi)\\
\label{Eq_linking_constraints}
&\text{subject to }& \sum_{i=1}^n A_i\xbi =b\\
\label{Eq_separable_sets}
&\phantom{\text{subject to }}& \xbi \in X_i, \quad i = 1,2,\dots,n,
\end{eqnarray}
\end{subequations}
where for $i=1,2,\dots,n$ we assume that $X_i \subseteq \R^{N_i}$ are convex and closed sets, $g_i: \R^{N_i} \to \R \cup \{+\infty\}$ are convex and closed extended real-valued functions and $A_i \in \R^{m\times N_i}$.

While the objective function \eqref{Eq_original_objective} and the constraints \eqref{Eq_separable_sets} are separable in the decision vectors $x^{(1)},\dots,x^{(n)}$, the linear constraint \eqref{Eq_linking_constraints} links them together, which  makes the problem difficult to solve. Moreover, we are interested in the case when $n$ is very large (millions, billions and more), which introduces further computational challenges.

It will be useful to think of the decision vectors $\{x^{(i)}\}$ as ``blocks'' of a single decision vector $x \in \R^N$, with $N=\sum_i N_i$. This can be achieved as follows. We first partition the $N \times N$ identity matrix $I$ columnwise into $n$ submatrices $U_i \in \R^{N\times N_i}$, $i=1,2,\dots,n$, so that $I=[U_1,\dots,U_n]$, and then set $x = \sum_i U_i x^{(i)}$. That is, $x$ is the vector composed by stacking the vectors $x^{(i)}$ on top of each other.
It is easy to see that $x^{(i)} = U_i^T x \in \R^{N_i}$. Moreover, if we let \[A \eqdef \sum_{i=1}^n A_i U_i^T \in \R^{m\times N},\] then \eqref{Eq_linking_constraints} can be written compactly as $Ax = b$. Note also that\begin{equation}\label{Eq_0809s809s}A_i = A U_i, \qquad i=1,2,\dots,n.\end{equation} If we now write $g(x) \eqdef \sum_i g_i (x^{(i)})$ and $X \eqdef \sum_i U_i X_i \subseteq \R^N$, then  problem \eqref{Eq_original_objective}--\eqref{Eq_separable_sets} takes the following form:
\begin{subequations}
\label{Eq_problem2}
\begin{eqnarray}
\label{Eq_original_objective2}
     &\displaystyle\min_{x\in \R^N}&  g(x)\\
\label{Eq_linking_constraints2}
&\text{subject to }& Ax = b\\
\label{Eq_separable_sets2}
&\phantom{\text{subject to }}& x \in X.
\end{eqnarray}
\end{subequations}

A typical approach to overcoming the issue of nonseparability of  the linking constraint \eqref{Eq_linking_constraints2} is to drop it and instead consider the \emph{augmented Lagrangian},
\begin{eqnarray*}
     F_\pi(x) &\eqdef& g(x) + \ve{\pi}{b - Ax} + \tfrac{r}{2}  \|b - Ax\|^2,
\end{eqnarray*}
where $\pi\in \R^m$ is a vector of Lagrange multipliers, $r>0$ is a penalty parameter and $\|u\| = \ve{u}{u}^{1/2} = (\sum_{j} u_j^2)^{1/2}$ is the standard Euclidean norm. Now, the Method of Multipliers \cite{Bertsekas96,hestenes69} can be employed to solve problem \eqref{Eq_problem} as described below (Algorithm~\ref{A_MoM}).

\begin{algorithm}[H]
\caption{(Method of Multipliers)}\label{A_MoM}
  \begin{algorithmic}[1]
    \State \textbf{Initialization:} $\pi_0 \in \R^m$ and iteration counter $k=0$
    \While {the stopping condition has not been met}
    \State \textbf{Step 1:} Fix the multiplier $\pi_k$ and solve
\begin{subequations}
\begin{eqnarray}
     \displaystyle z_k \leftarrow \min_{x \in X} F_{\pi_k}(x).
     \label{Eq_Subproblem}
\end{eqnarray}
\State \textbf{Step 2:} Update the multiplier
\begin{equation}
     \pi_{k+1} \leftarrow \pi_k + r(b-Az_k),
\end{equation}
 \phantom{\textbf{Step 2:dfg}} and update the iteration counter $k \gets k+1$.
\end{subequations}
\EndWhile
  \end{algorithmic}
\end{algorithm}

\section{The Problem and Our Contributions}

The focus of this paper is on the optimization problem \eqref{Eq_Subproblem}. Hence, we need not be concerned about the dependence of $F$ on $\pi$ and will henceforth refer to the objective function, dropping the constant term $\ve{\pi}{b}$, as $F(x)$. Ignoring the constant term $\ve{\pi}{b}$, problem \eqref{Eq_Subproblem} is a \emph{convex composite }optimization problem, i.e., a problem of the form
\begin{equation}
\label{E_FaugLag}
     \min_{x\in \R^N}  \{F(x)\eqdef f(x)+\Psi(x)\},
\end{equation}
where $f$ is a smooth convex function and $\Psi$ is a separable (possibly nonsmooth) convex function. Indeed, we may set

\begin{equation}
\label{D_f}
  f(x)\eqdef \frac{r}{2}\|b-Ax\|^2 =  \frac{r}{2}  \left\|b-\sum_{i=1}^nA_i\xbi \right\|^2,
\end{equation}
and
\[
\Psi(x) \eqdef \begin{cases} g(x) - \ve{\pi}{Ax}, & x \in X,\\
+\infty, & \text{otherwise.}
\end{cases}
%=  \sum_{i=1}^n \left\{g_i(\xbi)- \ve{A_i^T\pi}{ \xbi}\right\}.
\]

The main purpose of this work is to draw links between two existing decomposition methods for solving \eqref{E_FaugLag}, one old and one new,  both based on separable approximations to the objective function. In particular, we consider DQAM  of Mulvey and Ruszczy\'nski \cite{Mulvey92,Mulvey95,Ruszczynski95} and PCDM of Richt\'{a}rik and Tak\'{a}\v{c} \cite{Richtarik12a}, respectively. Our main contributions (not in order of significance) include:

%Problem \eqref{Eq_Subproblem} is still difficult to solve however, because the augmented Lagrangian is nonsmooth and non-separable (due to the quadratic penalty term that involves cross products of the form $\langle A_ix\ii,A_jx\jj\rangle$).

\begin{enumerate}

\item \textbf{Two measures of separability.} We show that the parameter ``number of neighbours'', used in the analysis of DQAM \cite{Ruszczynski95}, and the degree of partial separability, used in the analysis of PCDM \cite{Richtarik12a}, coincide up to an additive constant in the case of quadratic $f$.

\item \textbf{Two generalizations of DQAM.} We provide a simplified derivation of the diagonal quadratic approximation, which enables us to propose two generalizations of DQAM (Section~\ref{S_2gen}) to non-quadratic functions $f$, based on
    \begin{enumerate}
    \item[(i)] a finite difference separable approximation to the augmented Lagrangian (Algorithm~\ref{A_DQA-FD}), and
    \item[(ii)]a quadratic approximation with the Hessian matrix replaced by an approximation of its block diagonal (Algorithm~\ref{A_DQA-Hessian}).
    \end{enumerate}
    We do not study the complexity of these algorithms in this paper.

\item \textbf{Equivalence of PCDM and DQAM for smooth problems.} We identify a situation in which the second of our generalizations of DQAM (Algorithm~\ref{A_DQA-Hessian}) coincides with a ``fully parallel'' variant of PCDM (Algorithm~\ref{A_PCD_DSO}) for an appropriate selection of parameters of the method (see Section~\ref{S_97d070dsds}, Theorem~\ref{T_coincide}). This happens for problems with arbitrary smooth $f$ and $\Psi\equiv 0$. %(i.e., feasibility problems from the viewpoint of \eqref{Eq_original_objective}--\eqref{Eq_separable_sets}).

\item \textbf{Improved complexity of PCDM under strong convexity.} We derive an improved complexity result for PCDM in the case when $F$ is strongly convex (Section~\ref{S_Complexity}, Theorem~\ref{Thm_complexity}). The result is much better than that in \cite{Richtarik12a} in situations where the strong convexity constant of $F$ is much larger than the sum of the strong convexity constants of the constituent functions $f$ and $\Psi$.

\item  \textbf{Versatility of PCDM.}  PCDM enjoys complexity guarantees even in the case when $F$ is merely convex, as opposed to it being strongly convex. Moreover, PCDM is flexible in that it allows for an \emph{arbitrary} number of block updates per iteration, whereas DQAM needs to update \emph{all blocks}.

\item \textbf{Complexity in the strongly convex case.}
      We study the newly developed complexity guarantees for (fully parallel variant of) PCDM (Algorithm~\ref{A_PCD_DSO}) and the existing convergence rates for DQAM and show that even though DQAM is specifically designed to approximate the augmented Lagrangian, PCDM has much better theoretical guarantees (Section~\ref{S_complexity_comp}). In particular, if $F$ is strongly convex, both DQAM and  PCDM converge linearly;    that is,  $F(x_{k+1}) \leq q F(x_k)$, where $q$ depends on the method. However, we show that  $q$ is much better (i.e., smaller) for PCDM than for DQAM, which then leads to vast speedups in terms of iteration complexity. In particular, we show that the theoretical bound for the number of iterations required to find an $\epsilon$-approximate solution is at least \begin{equation}\label{eq:090909oijsod}\frac{16(\omega-1)^3}{\omega}\times \frac{L'}{\bar{L}} \qquad (\geq 8\tfrac{L'}{\bar{L}}(\omega-1)^2 \text{ for } \omega\geq 2)\end{equation}
    \emph{times larger} for DQAM than for (fully parallel) PCDM. Here, $\omega$ is the degree of partial separability\footnote{The multiplicative improvement factor \eqref{eq:090909oijsod} is only valid for $\omega \geq 2$ as DQAM was not analyzed in the case $\omega=1$.} of $f$ (defined in Section~\ref{S_SEP}), and $L'$ and $\bar{L}$ are the maximum and average of the constants $L_i = r\|A_i^T A_i\|$,  $i=1,2,\dots,n$, respectively. Note that the speedup factor  \eqref{eq:090909oijsod} is larger than 1000 for $\omega=10$ even in the case when $L'=\bar{L}$. In practice, however, $L'$ will typically be larger than $\bar{L}$, often much larger.

    The form of the speedup factor \eqref{eq:090909oijsod} comes from the fact that DQAM depends on $(\omega-1)^3$ and $L'$ (while PCDM depends on $\omega$ and $\bar{L}$), which adversely affects its theoretical complexity rate. Let us comment that Mulvey and Ruszczy\'{n}ski \cite{Mulvey92} remarked that the dependence of DQAM on $\omega$  is  in practice much better than cubic, although this was not previously established theoretically. We thus answer their conjecture in the affirmative, albeit for a (as we shall see, not so very) different method. To the best of our knowledge,  no improved results were available in the literature up to this point.

\item \textbf{Optimal number of block updates per iteration.} We show that under a simple parallel computing model it is optimal for PCDM to update as many block in a single iteration as there are parallel processors (Section~\ref{S_OPT}, Theorem~\ref{T_opt}). As a consequence, the DQAM approach of updating \emph{all} blocks in a single iteration is less than optimal.

\item \textbf{Computations.}    We also provide preliminary numerical results that show the practical advantages of PCDM.
\end{enumerate}

\section{Two Measures of Separability} \label{S_SEP}

In this section we provide a link between the measures of separability of $f$ utilized in the analysis of DQAM \cite{Mulvey92} and PCDM \cite{Richtarik12a}.
In the first case, the quantity is defined specifically for a quadratic objective; in the second case the definition is general. As we shall see, both quantities coincide in the quadratic case. As the complexity of the two methods depends on these quantities, our observation allows us to compare the convergence rates.
Both measures of separability are to be understood with respect to the fixed block structure introduced before.

We first define a separability measure introduced for the convex quadratic $f(x)=\tfrac{r}{2}\|b-Ax\|^2$ by Ruszczy\'{n}ski \cite[Section 3]{Ruszczynski95} (and called the ``number of neighbors'' therein).

Let $A_{ji}$ be the $j$-th row of matrix $A_i$. Let $m_i$ be the number of nonzero rows in $A_i$ and for each $i$  define  an $m\times m_i$ matrix $E^{i}$ as follows: $E^i_{jl} =1$  if $A_{ji}$ is the $l$-th consecutive nonzero row of the matrix $A_i$, and 0 otherwise. Note that  $E^i$ is a matrix containing zeros and $m_i$ ones, one in each  column. Further, for any $i\in \{1,2,\dots,n\}$ and $u\in \{1,2,\dots,m_i\}$
define \begin{equation}\label{eq:090909gfgf}V(i,u)\eqdef \{(i',u')\;:\; i\in\{1,2,\dots,n\}, \; u'\in \{1,2,\dots,m_{i'}\},\; k\neq i,\; \langle E^{i}_u, E^{i'}_{u'} \rangle\neq 0\},\end{equation}
where $E^i_u$ is the $u$-th column of matrix $E^i$.

\begin{definition}[Ruszczy\'{n}ski separability] The Ruszczy\'{n}ski degree of separability of the function $f$ defined in \eqref{D_f}  is
\begin{equation}\label{eq:omega_R}\omega_R = \max \{|V(i,u)| \;:\; i=1,2,\dots,n, \; u=1,2,\dots,m_i\}.\end{equation}
\end{definition}

We now define the measure of separability used by Richt\'{a}rik and Tak\'{a}\v{c} \cite{Richtarik12a} in the analysis of PCDM.

\begin{definition}[Partial separability]

A smooth convex function $f:\R^N \to \R$ is  partially separable of degree $\omega$ if there exists a collection ${\cal J}$ of subsets of $\{1,2,\ldots, n\}$ such that
\begin{equation}
\label{E_separability}
     f(x)=\sum_{J\in \mathcal{J}}f_{J}(x) \qquad \text{ and }    \qquad \max_{J\in \mathcal{J}}|J| \leq \omega,
\end{equation}
where for each $J$,  $f_{J}$ is a smooth convex function that depends on $x^{(i)}$ for $i\in J$ only.
\end{definition}

%Note that, trivially, any convex function is partially separable of degree $n$.

Our first result says that  in the case of convex quadratics, the two measures of separability defined above coincide. This will allow us to provide a direct comparison of the complexity results of PCDM and DQAM.

\begin{theorem}
For convex quadratic function $f$ given by \eqref{D_f} we have $\omega = \omega_R+1$.
\end{theorem}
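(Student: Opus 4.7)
The plan is to exhibit the natural row-by-row decomposition of the quadratic $f(x)=\frac{r}{2}\|b-Ax\|^2$ and to match the two separability measures term by term, by relating both of them to the support pattern of the rows of $A$.

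Writing $A_{j,:}$ for the $j$-th row of $A$, decompose $f(x)=\sum_{j=1}^{m} f_j(x)$ with $f_j(x)=\frac{r}{2}\bigl(b_j-\sum_{i=1}^{n}A_{ji}x^{(i)}\bigr)^2$. The summand $f_j$ is a convex smooth function that depends on the block $x^{(i)}$ precisely when $A_{ji}\neq 0$, so the natural index sets are $J_j\eqdef\{i:A_{ji}\neq 0\}$ and $\mathcal{J}=\{J_1,\dots,J_m\}$ (rows with $J_j=\emptyset$ contribute only a constant and may be discarded). This immediately yields $\omega=\max_j|J_j|$.

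Next I would unpack $\omega_R$. By construction, the $u$-th column $E^{i}_u$ of $E^i$ is the standard basis vector $e_{j(i,u)}\in\R^m$, where $j(i,u)$ is the global row index of $A$ corresponding to the $u$-th nonzero row of $A_i$. Hence $\langle E^{i}_u,E^{i'}_{u'}\rangle\neq 0$ if and only if $j(i,u)=j(i',u')$; that is, the same row $j$ of $A$ is nonzero in both $A_i$ and $A_{i'}$. Consequently, writing $j=j(i,u)$, the set $V(i,u)$ (with the natural reading $i'\neq i$ of the typo-laden condition in \eqref{eq:090909gfgf}) is in bijection with $J_j\setminus\{i\}$, giving $|V(i,u)|=|J_j|-1$; note $i\in J_j$ because $A_{ji}\neq 0$ by the very definition of $j(i,u)$.

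As $(i,u)$ ranges over all admissible pairs, the induced index $j(i,u)$ ranges precisely over the row indices $j$ with $J_j\neq\emptyset$. Therefore
\[
\omega_R\;=\;\max\{|J_j|-1\,:\,J_j\neq\emptyset\}\;=\;\omega-1,
\]
which is the desired identity $\omega=\omega_R+1$. The only genuine hurdle is bookkeeping: disentangling the local block-and-nonzero-row indexing $(i,u)$ used in the definition of $E^i$ from the global row index $j$ of $A$. Once the identification $u\leftrightarrow j(i,u)$ is made explicit, both measures reduce to statements about the row supports $J_j$ and the equivalence is immediate.
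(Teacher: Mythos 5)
Your proposal is correct and follows essentially the same route as the paper: the same row-wise decomposition giving $\omega=\max_j|\{i:A_{ji}\neq 0\}|$, the same identification of the columns of $E^i$ with standard basis vectors indexed by the nonzero rows of $A_i$ so that $|V(i,u)|=\omega_{j(i,u)}-1$, and the same observation that the indices $j(i,u)$ sweep out exactly the nonzero rows when maximizing. Your explicit remark that empty-support rows can be discarded and your bijection with $J_j\setminus\{i\}$ are just cleaner phrasings of the paper's counting argument.
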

\begin{proof}
First, we can write
\begin{equation}
f(x) = \frac{r}{2}\sum_{j=1}^m\left(b_j - \sum_{i=1}^n A_{ji}\xbi\right)^2,
\label{Eqn_Phir}
\end{equation}
where $b_j$ is the $j$-th entry of $b$. Note that all summands in the decomposition are convex and smooth. Moreover, summand $j$ depends on  $x^{(i)}$ if and only if $A_{ji}\neq 0$. If we now let
\begin{equation}\label{eq:omega_j} \omega_j = |\{i \;:\; A_{ji}\neq 0\}|, \qquad j=1,2,\dots,m,
\end{equation} then we conclude  that $f$ is partially separable of degree \begin{equation}\label{eq:bb564}\omega = \max_{j\in \{1,2,\dots,m\}} \omega_j.\end{equation}
In the rest of the proof we  proceed in two steps.
\begin{enumerate}
\item[(i)] Let us fix $i\in \{1,2,\dots,n\}$, $u\in \{1,2,\dots,m_i\}$ and let $j=j(i,u)$ be such row index for which $E^i_{ju} = 1$.
Note that, since $E^i$ is a 0-1 matrix with exactly one entry of each column equal to 1, we have $E^i_{j'u} =0$ for all $j'\neq j$. This means that for any $i' \in\{1,2,\dots,n\}$ and $u'\in\{1,2,\dots,m_{i'}\}$,
\begin{equation}\label{eq:98udd8}\ve{E^i_u}{E^{i'}_{u'}} \neq 0 \quad \Leftrightarrow \quad E^{i'}_{ju'} =1.\end{equation}
%Therefore,\begin{equation}\label{eq:09asjhsaas}|\{l \;:\; \ve{E^i_u}{E^k_l} \neq 0 \}| \overset{\eqref{eq:98udd8}}{=} |\{l \;:\; E^k_{jl} = 1\}|.\end{equation}
Likewise, $E^{i'}$ has at most entry equal to 1 in each row. Moreover, the $j$-th row of $E^{i'}$
contains $1$ precisely when $A_{ji'}\neq 0$. This means that
\begin{equation}\label{eq:09dsjsdisd4343} |\{u' \;:\; E^{i'}_{ju'} = 1\} | =
\begin{cases} 1 \quad & \text{if} \quad A_{ji'} \neq 0,\\
0 \quad & \text{if} \quad A_{ji'}= 0.
\end{cases}
\end{equation}
We now have
\begin{eqnarray}
|V(i,u)|&\overset{\eqref{eq:090909gfgf}}{=}& |\{(i',u') \;:\; i'\neq i,\; \ve{E^i_u}{E^{i'}_{u'}}\neq 0 \}|\notag\\
&\overset{\eqref{eq:98udd8}}{=}& |\{(i',u') \;:\; i'\neq i,\; E^{i'}_{ju'}=1 \}|\notag\\
&=& \sum_{i' \neq i} |\{u'\;:\; E^{i'}_{ju'}=1\}| \notag\\
&\overset{\eqref{eq:09dsjsdisd4343}+\eqref{eq:omega_j}}{=}& \omega_j - 1.\label{eq:lklk0939}
\end{eqnarray}
\item[(ii)] Building on the result from part (i), we can now write
\begin{eqnarray*}
\omega_R &\overset{\eqref{eq:omega_R}}{=} & \max \{|V(i,u)|\;:\;i\in\{1,2,\dots,n\},\; u\in\{1,2,\dots,m_i\}\}\\
&\overset{\eqref{eq:lklk0939}}{=}& \max \{\omega_{j(i,u)} - 1\;:\; i\in\{1,2,\dots,n\},\; u\in\{1,2,\dots,m_i\}\\
&=& \max_{j\in \{1,2,\dots,m\}} \omega_j - 1\\
&\overset{\eqref{eq:bb564}}{=}& \omega - 1.
\end{eqnarray*}
In the third identity above we used the simple observation that every row $j \in\{1,2,\dots,m\}$ for which $\omega_j\neq 0$ can be written as $j=j(i,u)$ for any $i$ for which $A_{ji}\neq 0$, and some $u$ (which depends on $i$).
\end{enumerate}

%\newpage
%
%%Hence, $\omega = \max_j \sum_{i=1}^n I_{ji}$.
%
%By definition, exactly one entry of $E_{i,j}$ is nonzero; let this correspond to row $u$. Further, the inner product $\langle E_{i,j}, E_{k,l}\rangle$ is nonzero if and only if $E_{k,l}$ also has a nonzero entry at row $u$. Recall that the $u$-th row of $E_{k}$ is contains one nonzero entry precisely when $I_{uk}=1$ and the column $l^*$ which contains the nonnegative entry is unique. Hence, thinking of $u$ and $l^*$ as functions of $i,j$ and $k$ as described above, we can write
%\[
%V(i,j)=\{(k,l):k\neq i, \langle E_{i,j}, E_{k,l}\rangle\neq 0\}=\left\{\begin{array}{ll}
%\{(k,l^*): k\neq i, I_{uk}=1\} &\mbox{if $I_{ji}=1$}\\
%\emptyset &\mbox{if $I_{ji}=0$}
%\end{array}
%\right..
%\]
%Hence, we conclude that
%\[
%\omega_R = \max_{(i,j)}|V(i,j)|=\max_{\{(i,u): I_{ui}=1\}}\sum_{k\neq i}I_{uk}=\omega-1.
%\]
\end{proof}

Let us remark that besides \eqref{Eqn_Phir}, we could have decomposed $f$ also as
\begin{equation}
f(x)= \frac{r}{2}\left(\|b\|^2 - 2\sum_{i=1}^n\langle b,A_i\xbi\rangle+\sum_{i=1}^n\sum_{j=1}^n \langle A_i\xbi,A_j x^{(j)}\rangle\right),
\label{Eq_separated1}
\end{equation}
with each summand depending on at most 2 blocks of $x$. However, we \emph{cannot} conclude that $f$ is partially separable of degree 2 because the terms are \emph{not} all convex, which is required in the definition of partial separability.

%Now, we are ready to outline solution methodologies taking advantage of the partial separability structure of the augmented Lagrangian. In Section~\ref{S_DQA} we described a now classical approach to solving \eqref{E_FaugLag}, the Diagonal Quadratic Approximation (DQA) method. In Section~\ref{S_ESO} we propose that \eqref{E_FaugLag} be solved by parallel coordinate descent. We also analyze the convergence of both methods and provide a detailed comparison of these methods. In Sections~\ref{S_Comparison} and~\ref{S_Complexity} we compare both methods analytically and discuss how these methods respond to parallelization. In Section~\ref{S_Numerical} we provide extensive computational studies to compare the practical performance of the methods.

\section{Diagonal Quadratic Approximation Method}\label{S_DQA}

In this section we present the Diagonal Quadratic Approximation Method (DQAM) that was introduced and analysed in a series of papers by Mulvey and Ruszczy\'{n}ski \cite{Mulvey92,Mulvey95}, Ruszczy\'{n}ski \cite{Ruszczynski95} and Berger, Mulvey and Ruszczy\'{n}ski \cite{Berger94}. As explained in Section~\ref{S_AL}, the augmented Lagrangian is nonseparable because of the cross products $\langle A_ih^{(i)},A_j h^{(j)} \rangle$ appearing in $f(x+h)$. The DQAM provides a separable approximation of $f(x+h)$ by ignoring these cross terms; this approximation is referred to as the diagonal quadratic approximation (DQA). This makes Step 1 of the method of multipliers (\eqref{Eq_Subproblem} in Algorithm~\ref{A_MoM}) significantly easier to solve, and amenable to parallel processing.

First, notice that we can write
\begin{eqnarray}
\notag f(x+h)
\notag &=& \tfrac{r}{2}\|b-A(x+h)\|^2 \\ %= \tfrac{r}{2}\|b\|^2 - r\ve{b}{A(x+h)} + \tfrac{r}{2}\|A(x+h)\|^2\\
\notag &=& \tfrac{r}{2}\|b\|^2 - r\ve{b}{A(x+h)} + \tfrac{r}{2}\left(\|Ax\|^2 + 2\ve{Ax}{Ah} + \|Ah\|^2\right)\\
\notag &=& f(x) + \ve{f'(x)}{h} + \tfrac{r}{2}\|Ah\|^2\\
\notag &=& f(x) + \ve{ f'(x)}{h} + \tfrac{r}{2}\sum_{i=1}^n \|A_i h^{(i)}\|^2 + \tfrac{r}{2}(\|Ah\|^2 - \sum_{i=1}^n\|A_i h^{(i)}\|^2)\\
\label{E_DQA_f_formulation}
  &=& f(x) + \sum_{i=1}^n\ve{( f'(x))^{(i)}}{h^{(i)}} + \tfrac{r}{2}\sum_{i=1}^n \|A_i h^{(i)}\|^2 + \tfrac{r}{2}\sum_{i\neq j} \ve{A_i h^{(i)}}{A_j h^{(j)}}.
\end{eqnarray}

Now observe that it is only the last term in \eqref{E_DQA_f_formulation}, composed of products $\ve{A_i h^{(i)}}{A_j h^{(j)}}$ for $i\neq j$, which is not separable. Ignoring these terms, we get a separable approximation of $f(x+h)$ in $h$,
\begin{eqnarray}
\label{E_fDQA}
f(x+h) \approx f^{\text{DQA}}(x+h) &\eqdef& f(x) + \ve{ f'(x)}{h} + \frac{r}{2}\sum_{i=1}^n \|A_i h^{(i)}\|^2,
\end{eqnarray}
which in turn leads to a separable approximation of $F(x+h)$ in $h$:
\begin{equation}
\label{E_FDQA}
  F(x+h) \overset{\eqref{E_FaugLag}}{=} f(x+h) + \Psi(x+h)  \overset{\eqref{E_fDQA}}{\approx} f^{\text{DQA}}(x+h) + \Psi(x+h).
\end{equation}

Mulvey and Ruszczy\'{n}ski \cite{Mulvey92} propose a slightly less transparent construction of the same approximation. For a fixed  $x$, they approximate $f(y)$ via replacing the cross-products $\ve{A_i y^{(i)}}{A_j y^{(j)}}$, for $i\neq j$, by
\begin{equation}\label{eq:09jsdjsdsd}\ve{A_i y^{(i)}}{A_j x^{(j)}} + \ve{A_i x^{(i)}}{A_j y^{(j)}} - \ve{A_i x^{(i)}}{A_j x^{(j)}}.\end{equation}
Clearly, this is equivalent to what we do above, which can be verified by substituting $y=x+h$ into \eqref{eq:09jsdjsdsd}.

\subsection{The algorithm}

We now present the DQA method (Algorithm~\ref{A_DQA}). The algorithm replaces Step 1 of the Method of Multipliers (Algorithm \ref{A_MoM}). In what follows, $\theta\in (0,1)$ is a user defined parameter.

\begin{algorithm}[H]
\caption{(DQAM: Diagonal Quadratic Approximation Method)}\label{A_DQA}
  \begin{algorithmic}[1]
    \For {$k = 0,1,2,\dots$}
    \State \textbf{Step 1a:} Solve for $h_k$
    \begin{subequations}
    \begin{equation}
\label{DQA_step1a}
     h_k \gets  \arg \min_{h \in \R^N} \left\{ f^{\text{DQA}}(x_k+h) + \Psi(x_k + h)\right\}
     \end{equation}
\State \textbf{Step 1b:} Determine intermediate vector $y_k$
\begin{equation}
\label{DQA_step1b}
     y_k \gets  x_k + h_k
\end{equation}
\State \textbf{Step 1c:} Form the new iterate $x_{k+1}$
\begin{equation}
     \label{DQA_step1c}
          x_{k+1} \gets (1-\theta)x_k + \theta y_k
\end{equation}
\end{subequations}
\EndFor
  \end{algorithmic}
\end{algorithm}

Let us now comment on the individual steps of Algorithm~\ref{A_DQA}. Step 1a is easy to  execute because the function  that is being minimized in \eqref{DQA_step1a} is separable in $h$, and hence the problem decomposes into $n$ independent \emph{lower-dimensional} problems:
\[h_k^{(i)} = \arg \min_{h^{(i)} \in \R^{N_i}} \left\{ \ve{(f'(x_k))^{(i)}}{h^{(i)}} + \frac{r}{2} \|A_i h^{(i)}\|^2  + \Psi_i(x_k^{(i)} + h^{(i)})\right\}, \quad i=1,2,\dots,n.\]
Moreover, the problems are \emph{independent}, and hence the updates $h_k^{(1)}, \dotsm h_k^{(n)}$ can be computed \emph{in parallel.}
In \eqref{DQA_step1b} an intermediate vector $y_k$ is formed, and then in \eqref{DQA_step1c} a convex combination of the current iterate $x_k$ and the intermediate vector $y_k$ is taken to produce the new iterate $x_{k+1}$. Step \eqref{DQA_step1c} is needed because DQAM uses a local approximation, so if the new point $x_k+h_k$ is far from $x_k$, the approximation error may be too big and a reduction in the objective function value is not guaranteed. This would lead to serious stability and convergence problems in general, and hence, Step 1c is employed as a correction step for regularizing the method.

\subsection{Two generalizations} \label{S_2gen}

DQAM was originally designed and analyzed for convex quadratics. Here we propose two generalizations of the method to non-quadratic convex functions $f$. Our generalizations are based on the following simple result.

%U_i^T f''(x_k) U_i

\begin{proposition} If $f(x) = \tfrac{r}{2}\|b-Ax\|^2$, then for all $x,h\in \R^N$,
\label{L_DQA_fapprox}
\begin{equation}
\label{E_approx}
  f^{\text{DQA}}(x+h) = f(x) + \sum_{i=1}^n \left[f(x+U_i h^{(i)}) - f(x)\right]
\end{equation}
and
\begin{eqnarray}
\label{E_approx2}
  f^{\text{DQA}}(x+h) &=& %f(x) + \ve{\nabla f(x)}{h} + \tfrac{1}{2}\ve{\sum_{i=1}^n U_i U_i^T f''(x) U_i U_i^Th}{h} \notag\\
     f(x) + \sum_{i=1}^n \left[ \ve{(f'(x))^{(i)}}{h^{(i)}} +  \tfrac{1}{2} \ve{C_i(x) h^{(i)}}{h^{(i)}}\right],
\end{eqnarray}
where $C_i(x) = U_i^T f''(x) U_i$.

%\[\ve{A^TAh}{h} \to \sum_{i=1}^n \ve{A_i^T A_i h^{(i)}}{h^{(i)}}\]

\end{proposition}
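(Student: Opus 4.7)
My plan is to derive both identities directly from the closed-form expression
\[
f^{\text{DQA}}(x+h) = f(x) + \ve{f'(x)}{h} + \tfrac{r}{2}\sum_{i=1}^n \|A_i h^{(i)}\|^2
\]
already established in \eqref{E_fDQA}, together with the relations $A_i = AU_i$ from \eqref{Eq_0809s809s} and $h^{(i)} = U_i^T h$. The two claimed formulas are really the same identity written in two different ``coordinates'': one in terms of function evaluations of $f$, and the other in terms of its gradient and a block-diagonal portion of its Hessian.

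For \eqref{E_approx2}, I would first use the block decomposition $h = \sum_{i} U_i h^{(i)}$ to split the linear term as $\ve{f'(x)}{h} = \sum_i \ve{U_i^T f'(x)}{h^{(i)}} = \sum_i \ve{(f'(x))^{(i)}}{h^{(i)}}$. For the quadratic term, I would compute $f''(x) = rA^T A$ (since $f$ is quadratic) and then substitute into the definition of $C_i(x)$:
\[
C_i(x) \;=\; U_i^T f''(x) U_i \;=\; r\,(AU_i)^T (AU_i) \;=\; r\, A_i^T A_i,
\]
so that $\tfrac{1}{2}\ve{C_i(x)h^{(i)}}{h^{(i)}} = \tfrac{r}{2}\|A_i h^{(i)}\|^2$. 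Summing over $i$ and adding $f(x)$ reproduces \eqref{E_fDQA}.

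For \eqref{E_approx}, I would apply the exact second-order Taylor expansion of the quadratic $f$ at $x$ with increment $U_i h^{(i)}$, namely
\[
f(x + U_i h^{(i)}) \;=\; f(x) + \ve{f'(x)}{U_i h^{(i)}} + \tfrac{r}{2}\|A U_i h^{(i)}\|^2 \;=\; f(x) + \ve{(f'(x))^{(i)}}{h^{(i)}} + \tfrac{r}{2}\|A_i h^{(i)}\|^2,
\]
where in the last step I again use $A_iU_i^T\cdot = AU_i \cdot$ and $U_i^T f'(x) = (f'(x))^{(i)}$. Subtracting $f(x)$, summing over $i$, and comparing with \eqref{E_fDQA} yields the claim.

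I do not anticipate any real obstacle; the proof is essentially a bookkeeping exercise in block notation. The only point to be careful about is invoking quadraticity of $f$ in two places: once to justify $f''(x) = rA^TA$ (so that $C_i(x)$ is a scaled Gram matrix independent of $x$), and once to justify that the second-order Taylor expansion of $f$ at $x$ with increment $U_i h^{(i)}$ is exact — both of which are immediate from \eqref{D_f}.
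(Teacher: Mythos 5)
Your proposal is correct and follows essentially the same route as the paper: both identities are reduced to the closed form \eqref{E_fDQA} by expanding the quadratic exactly (your ``exact Taylor expansion'' of $f(x+U_i h^{(i)})$ is the same algebra as the paper's direct expansion of $\tfrac{r}{2}\|b-Ax-A_i h^{(i)}\|^2$) and by using $f''(x)=rA^TA$ together with $A_i = AU_i$ to identify $\tfrac12\ve{C_i(x)h^{(i)}}{h^{(i)}}$ with $\tfrac{r}{2}\|A_i h^{(i)}\|^2$. Apart from treating \eqref{E_approx2} before \eqref{E_approx}, there is no substantive difference.
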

\begin{proof}First note that
\begin{eqnarray*} \sum_{i=1}^n \left[f(x+U_i h^{(i)}) - f(x)\right]  %&=&  \sum_{i=1}^n \left[\tfrac{r}{2}\|b-A(x+U_i h^{(i)})\|^2 -\tfrac{r}{2}\|b-Ax\|^2\right]\\
&=& \sum_{i=1}^n \left[\tfrac{r}{2}\|b-Ax - A_i h^{(i)}\|^2 - \tfrac{r}{2}\|b-Ax\|^2\right]\\
&=& \sum_{i=1}^n \left[r \ve{Ax-b}{A_i h^{(i)}} + \tfrac{r}{2}\|A_ih^{(i)}\|^2\right]\\
&=& \ve{f'(x)}{h} + \tfrac{r}{2}\sum_{i=1}^n \|A_i h^{(i)}\|^2,
\end{eqnarray*}
which, in view of \eqref{E_fDQA}, establishes \eqref{E_approx}. Finally, \eqref{E_approx2} follows from \eqref{E_fDQA} and the fact that
\[\tfrac{r}{2}\|A_i h^{(i)}\|^2 = \tfrac{1}{2}\ve{U_i^T f''(x) U_i h^{(i)}}{h^{(i)}},\]
which in turn follows from the identities $f''(x) = rA^TA$ and $A_i = AU_i$.
\end{proof}

Our two generalized methods are obtained by replacing $f^{\text{DQA}}(x+h)$ in Step 1 of Algorithm~\ref{A_DQA} by one of the two approximations \eqref{E_approx} and \eqref{E_approx2} (in the second case we allow for $C_i(x)$ to be an arbitrary positive semidefinite matrix and not necessarily $U_i^T f''(x) U_i$), leading to Algorithm~\ref{A_DQA-FD} and Algorithm~\ref{A_DQA-Hessian}, respectively.

\begin{algorithm}[H]
\caption{(Generalization of DQAM: Finite Differences Approximation)}\label{A_DQA-FD}
  \begin{algorithmic}[1]
    \For {$k = 0,1,2,\dots$}
    \State \textbf{Step 1a:} Solve for $h_k$
    \begin{subequations}
    \begin{equation}
\label{DQAFD_step1a}
     h_k \gets  \arg \min_{h \in \R^N} \left\{ f(x_k) + \sum_{i=1}^n \left[f(x_k + U_i h^{(i)}) - f(x_k)\right] + \Psi(x_k + h)\right\}
     \end{equation}
\State \textbf{Step 1b:} Determine intermediate vector $y_k$
\begin{equation}
\label{DQAFD_step1b}
     y_k \gets  x_k + h_k
\end{equation}
\State \textbf{Step 1c:} Form the new iterate $x_{k+1}$
\begin{equation}
     \label{DQAFD_step1c}
          x_{k+1} \gets (1-\theta)x_k + \theta y_k
\end{equation}
\end{subequations}
\EndFor
  \end{algorithmic}
\end{algorithm}

Algorithm~\ref{A_DQA-FD} is based on a finite difference approximation, and is applicable to (possibly) nonsmooth functions. Algorithm~\ref{A_DQA-Hessian} is based on a separable quadratic approximation. To the best of our knowledge, these algorithms have not been previously proposed, with the exception of the case when $f$ is a convex quadratic when both methods coincide with DQAM.

\begin{algorithm}[H]
\caption{(Generalization of DQAM: Separable Quadratic Approximation)}\label{A_DQA-Hessian}
  \begin{algorithmic}[1]
    \For {$k = 0,1,2,\dots$}
    \State \textbf{Step 1a:} Solve for $h_k$
    \begin{subequations}
    \begin{equation}
\label{DQAHessian_step1a}
     h_k \gets  \arg \min_{h \in \R^N} \left\{ f(x_k) + \ve{f'(x_k)}{h} + \tfrac{1}{2}\sum_{i=1}^n \ve{C_i(x_k) h^{(i)}}{h^{(i)}} + \Psi(x_k + h)\right\}
     \end{equation}
\State \textbf{Step 1b:} Determine intermediate vector $y_k$
\begin{equation}
\label{DQAHessian_step1b}
     y_k \gets  x_k + h_k
\end{equation}
\State \textbf{Step 1c:} Form the new iterate $x_{k+1}$
\begin{equation}
     \label{DQAHessian_step1c}
          x_{k+1} \gets (1-\theta)x_k + \theta y_k
\end{equation}
\end{subequations}
\EndFor
  \end{algorithmic}
\end{algorithm}

In this paper we do not analyze any of these methods. Instead, we propose that DQAM be replaced by PCDM, described in the next section.

%\begin{algorithm}[H]
%\caption{(Finite Differences Approximation Method)}\label{A_DQA_FDA}
%  \begin{algorithmic}[1]
%    \For {$k = 0,1,2,\dots$}
%    \State \textbf{Step 1a:} solve for $h_k$
%    \begin{subequations}
%\begin{equation}
%\label{DQA2_step1a}
%     h_k \gets   \arg \min_{h \in \R^N} \left\{ f(x_k) + \sum_{i=1}^n \left[f(x_k + U_i h^{(i)}) - f(x_k) \right] + \Psi(x_k + h) \right\}
%\end{equation}
%\State \textbf{Step 1b:} determine intermediate vector $y_k$
%\begin{equation}
%\label{DQA2_step1b}
%     y_k \gets  x_k + h_k
%\end{equation}
%\State \textbf{Step 1c:} update $x_k$
%\begin{equation}
%     \label{DQA2_step1c}
%          x_{k+1} \gets (1-\theta)x_k + \theta y_k
%\end{equation}
%\end{subequations}
%\EndFor
%  \end{algorithmic}
%\end{algorithm}

\section{Parallel Coordinate Descent Method}\label{S_ESOM}

As discussed in the introduction,  we propose that instead of implementing Step 1 of the Method of Multipliers (Algorithm \ref{A_MoM}) using DQAM, a parallel coordinate descent method (PCDM) be used instead. This section is devoted to describing the method, developed by Richt\'{a}rik and Tak\'{a}\v{c} \cite{Richtarik12a}.

\subsection{Block samplings}

As we shall see, unlike DQAM where all blocks are updated at each iteration, PCDM allows for an (almost) arbitrary random subset of blocks to be updated
at each iteration. The purpose of this section is to formalize this.

In particular, at iteration $k$ only blocks $i \in S_k \subseteq \{1,2,\dots,n\}$ are updated, where $\{S_k\}$, $k\geq 0$, are iid random sets having the following two properties:
\begin{equation}\label{eq:uddd8}\Prob(i \in S_k) = \Prob(j \in S_k) \qquad \text{for all} \qquad i,j \in \{1,2,\dots,n\},\end{equation}
\begin{equation}\label{eq:uddd9}\Prob(i \in S_k) > 0  \qquad \text{for all} \qquad i \in \{1,2,\dots,n\}.\end{equation}
It is easy to see that, necessarily, $\Prob(i \in S_k) = \tfrac{\Exp [|S_k|]}{n}$. Following \cite{Richtarik12a}, for simplicity we refer to an arbitrary random set-valued mapping with values in the power set $2^{\{1,2,\dots,n\}}$ by the name \emph{block sampling}, or simply \emph{sampling}. A sampling $S_k$ is called \emph{uniform} if it satisfies \eqref{eq:uddd8} and \emph{proper} if it satisfies \eqref{eq:uddd9}.

In \cite{Richtarik12a}, PCDM was analyzed for all proper uniform samplings. However, better complexity results were obtained for so called  \emph{doubly uniform} samplings, which belong to the family of uniform samplings. For brevity purposes, in this paper we concentrate on a subclass of doubly uniform samplings called \emph{$\tau$-nice} samplings, which we now define.

\begin{definition}[$\tau$-nice sampling]
Let $\tau$ be an integer between 1 and $n$. A sampling $\hat{S}$ is called $\tau$-nice if for all $S \subseteq \{1,2,\dots,n\}$,
\[\Prob(\hat{S} = S) = \begin{cases} 0, &  \quad  |S|\neq \tau, \\ \tfrac{1}{{n \choose \tau }}, & \quad \text{otherwise.}\end{cases}\]
\end{definition}

A natural candidate for $\tau$ is the number of available processors/threads as then updates to the $\tau$ blocks of $x_k$ can be computed in parallel. As we shall later see, this is also the optimal choice from the complexity point of view (Theorem~\ref{T_opt}).

\subsection{Expected Separable Overapproximation (ESO)} \label{S_ESO}

Fixing positive scalars $w_1,\dots,w_n$ (we write $w = (w_1,\dots,w_n)$), let us define a separable norm on $\R^N$ by
\begin{equation}
\label{S_Norms_1}
     \|x\|_w \eqdef  \left(\sum_{i=1}^n w_i \|x^{(i)}\|^2_{(i)}\right)^{1/2}, \quad x \in \R^N,
\end{equation}
where for each $i=1,2,\dots,n$ we fix a positive definite matrix $B_i \in \R^{N_i \times N_i}$ and set
\begin{eqnarray}
\label{S2_Norm_def}
  \|t\|_{(i)} \eqdef \langle B_i t,t \rangle^{1/2}, \quad t\in \R^{N_i}.
\end{eqnarray}

We can now define the concept of expected separable overapproximation.

\begin{definition}[Expected Separable Overapproximation (ESO) \cite{Richtarik12a}]
\label{D_ESO}
     Let $\beta $ and $w_1,\dots,w_n$ be positive constants and $\hat{S}$ be a proper uniform sampling. We say that $f:\R^N \to \R$ admits a $(\beta,w)$-ESO with respect to $\hat{S}$  (and, for simplicity, we write $(f,\hat{S}) \sim ESO (\beta,w)$) if for all $x,h \in \R^N$,
\begin{equation}
\label{E_ESO_f}
     \E\left[f\left(x+ {\textstyle \sum_{i\in \hat{S}} U_i h^{(i)} } \right)\right] \leq  f(x) + \frac{\E[|\hat{S}|]}{n} \left(\langle f'(x),h \rangle + \frac{\beta}{2} \|h\|_{w}^2\right).
\end{equation}

\end{definition}

In Section~\ref{S_algPCDM} we describe how the ESO is used to design a parallel coordinate descent method for solving problem \eqref{E_FaugLag}.
The issue of how the parameters $w$ and $\beta$ giving rise to an  ESO can be determined/computed will be discussed in Section~\ref{S_ESO_partially_sep}.

\subsection{The algorithm} \label{S_algPCDM}

Unlike with DQAM, were $f$ is replaced by $f^{\text{DQA}}$ and $\Psi$ is kept intact, PCDM replaces \emph{both} $f$ and $\Psi$. This is because in PCDM we compute an approximation to
\begin{equation}\label{eq:09809ds}\E \left[ F(x + {\textstyle \sum_{i \in \hat{S}}} U_i h^{(i)})\right] = \E\left[ f(x + {\textstyle \sum_{i \in \hat{S}}} U_i h^{(i)}) + \Psi(x + {\textstyle \sum_{i \in \hat{S}}} U_i h^{(i)}) \right],\end{equation}
which (unless $|\hat{S}|=n$) affects $\Psi$ as well. It can be verified (see \cite[Section~3]{Richtarik12a}) that due to separability of $\Psi$ the following identity holds:
\begin{equation}\label{eq:d09uddsj}\Exp\left[\Psi(x + {\textstyle \sum_{i \in \hat{S}}} U_i h^{(i)}) \right] = \left(1-\frac{\Exp[|\hat{S}|]}{n}\right) \Psi(x) +\frac{\Exp[|\hat{S}|]}{n} \Psi(x+h).\end{equation}
Substituting \eqref{eq:d09uddsj} and \eqref{E_ESO_f} into \eqref{eq:09809ds}, we obtain
\begin{equation}\label{eq:09s0asjdxxXX} \E \left[ F(x + {\textstyle \sum_{i \in \hat{S}}} U_i h^{(i)})\right] \leq F^{\text{ESO}}(x+h) \eqdef \left(1-\frac{\Exp[|\hat{S}|]}{n}\right) F(x) + \frac{\Exp[|\hat{S}|]}{n} H_{\beta,w}(x+h),\end{equation}
where
\begin{equation}
\label{H}
     H_{\beta,w}(x+h) \eqdef f(x) + \langle f'(x) ,h \rangle + \frac{\beta}{2} \|h\|_w^2 + \Psi(x+h),
\end{equation}
which is separable in $h$:
\begin{equation}
\label{H2}
     H_{\beta,w}(x+h) \overset{\eqref{S_Norms_1}+\eqref{S2_Norm_def}}{=} f(x) + \sum_{i=1}^n\left\{\langle (f'(x))^{(i)} ,h^{(i)} \rangle + \frac{\beta w_i}{2} \ve{B_i h^{(i)}}{h^{(i)}} + \Psi_i(\xbi+h^{(i)})\right\}.
\end{equation}

We are now ready to present the parallel coordinate descent method (Algorithm~\ref{A_PCD_ESO}).

\begin{algorithm}[H]
\caption{(PCDM: Parallel Coordinate Descent Method)}\label{A_PCD_ESO}
  \begin{algorithmic}[1]
  \State \textbf{Initialization:} $x_0\in \R^N$,  ESO parameters $(\beta,w)$
    \For {$k = 0,1,2,\dots$}
    \State \textbf{Step 1a:} Solve
    \begin{subequations}
\begin{equation}
\label{PCDM_step1}
     h_k \gets \arg \min_{h \in \R^{N}} F^{\text{ESO}}(x_k+h)
\end{equation}
\State \textbf{Step 1b:} Update $x_k$
\begin{equation}
\label{PCDM_step1b}
     x_{k+1} \gets x_k + \sum_{i \in S_k} U_i h_k^{(i)}
\end{equation}
\end{subequations}
\EndFor
  \end{algorithmic}
\end{algorithm}

Given an iterate $x_k$, in \eqref{PCDM_step1} we compute \begin{equation}\label{E_hi} h_k = h(x_k) \eqdef \arg \min_{h \in \R^N} F^{\text{ESO}}(x_k+h) \overset{\eqref{eq:09s0asjdxxXX}}{=} \arg \min_{h \in \R^N} H_{\beta,w}(x_k + h).\end{equation}
Further, note that  \eqref{PCDM_step1b} is equivalent to writing
\[x_{k+1}^{(i)} = \begin{cases}x_k^{(i)}, & \quad i \notin S_k, \\
x_k^{(i)} + h_k^{(i)}, & \quad i \in S_k.\end{cases}\]
That is, only blocks belonging to the random set $S_k$ are updated. This means that in \eqref{PCDM_step1} we need not compute all blocks of $h_k$. In view of \eqref{H2} and \eqref{E_hi},  this is possible, and hence \eqref{PCDM_step1} can be replaced by
\begin{equation}\label{PCDM_step1-separable}
     h_k\ii \gets \arg \min_{h^{(i)} \in \R^{N_i}} \left\{\langle (f'(x_k))^{(i)} ,h^{(i)}\rangle + \frac{\beta w_i}{2}\ve{B_i h^{(i)}}{h^{(i)}} + \Psi_i(x_k^{(i) } + h^{(i)})\right\}, \quad i \in S_k.
\end{equation}

\subsection{ESO for partially separable smooth convex functions}\label{S_ESO_partially_sep}

In order for PCDM to be implementable, one needs first to compute the parameters $w_1, \dots, w_n$ (defining the norm $\|\cdot\|_w$) and $\beta>0$ for which $(f,\hat{S})\sim ESO(\beta,w)$, i.e., for which \eqref{E_ESO_f} holds. Clearly, the parameters $\beta$ and $w$ depend on $f$ and $\hat{S}$.

In what follows we will assume that the gradient of $f$ is block Lipschitz. That is, there exist positive constants $L_1,\dots, L_n$ such that for all $x\in \R^N$, $i \in \{1,2,\dots,n\}$ and $h^{(i)}\in \R^{N_i}$,
\begin{equation}
\label{S2_Lipschitz}
     \| (f'(x + U_i t))^{(i)} - ( f'(x))^{(i)} \|_{(i)}^* \leq L_i \|t\|_{(i)},
\end{equation}
where $\|s\|_{(i)}^* \eqdef \max \{\ve{s}{x} \;:\; \|x\|_w=1\} = \ve{B_i^{-1}s}{s}^{1/2}$ is the conjugate norm to $\|\cdot\|_w$.

\begin{theorem}[Theorem 14 in \cite{Richtarik12a}] \label{T_ESO_omega}
Assume $f$ is convex, partially separable of degree $\omega$, and has block Lipschitz gradient with constants $L_1,L_2,\dots,L_n>0$. Further, assume that $\hat{S}$ is a $\tau$-nice sampling, where $\tau \in \{1,2,\dots,n\}$. Then \[(f,\hat{S})\sim ESO(\beta,w),\] where
\begin{equation}\label{eq:beta09809809}\beta = 1+\frac{(\omega-1)(\tau-1)}{\max\{1,n-1\}}, \qquad w_i = L_i, \qquad i=1,2,\dots,n.\end{equation}
\end{theorem}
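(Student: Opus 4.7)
The approach is to combine the partial separability decomposition $f = \sum_{J \in \mathcal J} f_J$ with the probabilistic structure of the $\tau$-nice sampling. Writing $h[\hat S] \eqdef \sum_{i \in \hat S} U_i h^{(i)}$, the goal is to establish \eqref{E_ESO_f} for the stated $\beta$ and $w$.

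The first step is a \emph{deterministic} quadratic majorant of the form
\begin{equation*}
f(x + h[S]) \leq f(x) + \sum_{i \in S} \ve{(f'(x))^{(i)}}{h^{(i)}} + \tfrac{1}{2} \sum_{i \in S} \theta_i(S)\, L_i\, \|h^{(i)}\|_{(i)}^2,
\end{equation*}
valid for any deterministic $S \subseteq \{1,\dots,n\}$, where $\theta_i(S) \leq \max_{J \ni i} |J \cap S|$ absorbs the cross terms introduced by the non-separability of the quadratic piece. The idea is to apply the descent lemma to each $f_J$ along $h[J \cap S]$ (the only sub-direction affecting $f_J$), use a Cauchy--Schwarz/Jensen bound $\|\sum_{i \in J \cap S} U_i h^{(i)}\|^2 \leq |J \cap S| \sum_{i \in J \cap S} \|U_i h^{(i)}\|^2$ to kill the cross products, and aggregate across $J$ using the identity $(f'(x))^{(i)} = \sum_{J \ni i} (f_J'(x))^{(i)}$.

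Next, I would take expectation over $\hat S$ using the elementary identities for $\tau$-nice samplings:
\begin{equation*}
\Prob(i \in \hat S) = \frac{\tau}{n}, \qquad \Prob(\{i,j\} \subseteq \hat S) = \frac{\tau(\tau-1)}{n(n-1)} \quad (i \neq j).
\end{equation*}
For any $J \in \mathcal J$ and any $i \in J$, decomposing $|J \cap \hat S| = \sum_{j \in J} \mathbf{1}_{j \in \hat S}$ gives
\begin{equation*}
\Exp\bigl[\mathbf{1}_{i \in \hat S}\cdot |J \cap \hat S|\bigr] = \frac{\tau}{n} + (|J|-1)\frac{\tau(\tau-1)}{n(n-1)} = \frac{\tau}{n}\left(1 + \frac{(|J|-1)(\tau-1)}{n-1}\right) \leq \frac{\tau}{n}\,\beta,
\end{equation*}
since $|J| \leq \omega$. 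This is precisely where the factor $\beta$ is born. Substituting this back into the deterministic majorant and recalling $\Exp[|\hat S|] = \tau$ yields exactly \eqref{E_ESO_f} with $w_i = L_i$.

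The main obstacle is the first step: the block Lipschitz constants $L_i$ are hypothesized for the aggregate $f$, not for the individual pieces $f_J$, so one cannot naively invoke a per-$J$ descent lemma with clean constants. The resolution is to argue directly with $f$, exploiting that the only cross terms $\langle U_i h^{(i)}, U_j h^{(j)}\rangle$ that survive in the second-order expansion correspond to pairs $\{i,j\}$ sharing a common $J \in \mathcal J$, and then to control these at-most-$\omega^2$ per-summand interactions via $2ab \leq a^2 + b^2$. This is the place where the proof earns its $\omega$-dependence rather than degenerating into an $n$-dependence, and it is the technical heart of the argument.
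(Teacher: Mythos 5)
This statement is not proved in the paper at all --- it is imported verbatim as Theorem~14 of \cite{Richtarik12a} --- so the comparison is against the proof given there. Your second ingredient, the hypergeometric moment identity $\Exp[\mathbf{1}_{i\in\hat{S}}\,|J\cap\hat{S}|]=\tfrac{\tau}{n}\bigl(1+\tfrac{(|J|-1)(\tau-1)}{n-1}\bigr)$, is indeed exactly where $\beta$ comes from in that proof. The genuine gap is your first step and the order in which you apply it. A per-realization majorant whose block-$i$ coefficient is $\theta_i(S)=\max_{J\ni i}|J\cap S|$ cannot deliver the stated $\beta$ after averaging: when a block belongs to several sets $J$ (the typical case for $f(x)=\tfrac{r}{2}\|b-Ax\|^2$, where each row of $A$ contributes one $J$), the quantity you must control is $\Exp[\mathbf{1}_{i\in\hat{S}}\max_{J\ni i}|J\cap\hat{S}|]$, and the expectation of a maximum strictly exceeds the maximum of the per-$J$ expectations you computed; substituting the single-$J$ bound into the max-based majorant is invalid. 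Concretely, take $n=100$ one-dimensional blocks, $\tau=2$, and $f=\sum_{k=1}^{50}\tfrac12(x^{(1)}+x^{(1+k)})^2$, so $\omega=2$ and $\beta=1+\tfrac{1}{99}$; yet $\Exp[\max_{J\ni 1}|J\cap\hat{S}|\,\big|\,1\in\hat{S}]=1+\tfrac{50}{99}\approx 1.5$. Any worst-case (deterministic) bound of this type can only produce a constant of order $\min\{\omega,\tau\}$, not $1+\tfrac{(\omega-1)(\tau-1)}{n-1}$. Your proposed ``resolution'' via the second-order expansion of $f$ and $2ab\le a^2+b^2$ has the same defect --- it yields a block-$i$ coefficient of the form $\sum_{J\ni i}(\text{per-}J\text{ curvature})\cdot|J\cap S|$, which you can compare to the aggregate $L_i$ only by passing to the max (and it also assumes second derivatives that the theorem does not); for quadratic $f$ one could instead keep the $J$-resolved weights and use linearity of expectation, but that repair is unavailable for general $C^{1,1}$ convex $f$.

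The cited proof avoids any deterministic majorant precisely so that the averaging acts \emph{before} the Lipschitz bound. One may assume without loss of generality that $|J|=\omega$ for all $J\in\mathcal{J}$; for each $J$, convexity of $f_J$ (no per-$J$ Lipschitz constants needed) gives, with $k=|J\cap\hat{S}|\ge 1$, $f_J\bigl(x+\sum_{i\in J\cap\hat{S}}U_ih^{(i)}\bigr)\le\tfrac1k\sum_{i\in J\cap\hat{S}}f_J(x+kU_ih^{(i)})$; conditioning on $k$ and using that $J\cap\hat{S}$ is then uniform over $k$-subsets of $J$, one re-aggregates over $J$ (legitimate because $k$ has the same law for every $J$ after the padding) to recover evaluations of the full $f$ at the points $x+kU_ih^{(i)}$, to which the assumed block Lipschitz constants $L_i$ apply; the proof is finished by $\Exp[k^2]/\Exp[k]=1+\tfrac{(\omega-1)(\tau-1)}{\max\{1,n-1\}}$, i.e., your moment identity. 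So your probabilistic computation is correct, but the missing idea is this per-$J$ Jensen/symmetry step; the deterministic first step you propose forecloses it and cannot reach the claimed $\beta$.
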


In Section~\ref{S_Complexity} we study the complexity of PCDM in the case covered by the above theorem (and under a further strong convexity assumption).

Consider now the special case of convex quadratic $f$ given by \eqref{D_f}. If the matrices $A_i^TA_i$, $i=1,2,\dots,n$, are all positive definite, we can choose $B_i=rA_i^T A_i$, $i=1,2,\dots,n$, in which case we will have $L_i=1$ for all $i$. Otherwise we  can  choose $B_i$ to be the $N_i\times N_i$ identity matrix, and  then \begin{equation}\label{eq:08998as8s}L_i = r\|A_i^TA_i\| \eqdef r \max_{\|h^{(i)}\|\leq 1} \|A_i^T A_i h^{(i)}\|,\end{equation}
where both norms in the definition are the standard Euclidean norms in $\R^{N_i}$.

\subsection{Fully parallel coordinate descent method}

PCDM used with an $n$-nice sampling $\hat{S}$ resembles DQAM in two ways:  i) it
updates \emph{all} blocks during each iteration, ii) it is not randomized.  Indeed, \begin{equation}\label{eq:s987a9s8}\sum_{i \in \hat{S}} U_i h^{(i)} = \sum_{i=1}^n U_i h^{(i)} = h,\end{equation}
and hence
\begin{eqnarray*}F(x+h) &\overset{\eqref{eq:s987a9s8}}{=}& \Exp \left[F(x+ {\textstyle \sum_{i \in \hat{S}} U_i h^{(i)}})\right] \quad \overset{\eqref{eq:09s0asjdxxXX}}{\leq} \quad F^{\text{ESO}}(x+h) \\
&\overset{\eqref{eq:09s0asjdxxXX}+ \eqref{H}}{=}& f(x) + \ve{f'(x)}{h} + \tfrac{\beta}{2}\|h\|_w^2 + \Psi(x+h).
\end{eqnarray*}

In particular, in the setting of Theorem~\ref{T_ESO_omega} we have $\beta = \omega$ and $w=L=(L_1,\dots,L_n)$, and Algorithm~\ref{A_PCD_ESO} specializes to Algorithm~\ref{A_PCD_DSO}.

\begin{algorithm}[H]
\caption{(Fully Parallel Coordinate Descent Method)}\label{A_PCD_DSO}
  \begin{algorithmic}[1]
  \State \textbf{Initialization:} $x_0 \in \R^N$
    \For {$k = 0,1,2,\dots$}
    \State \textbf{Step 1a:} Solve
    \begin{subequations}
\begin{equation}
\label{PCDM_step1098}
     h_k \gets \arg \min_{h \in \R^N} \left\{ f(x_k) + \ve{f'(x_k)}{ h} + \frac{\omega}{2}\sum_{i=1}^n  \ve{L_i B_i h^{(i)}}{h^{(i)}}+ \Psi(x_k+h) \right\}
\end{equation}
\State \textbf{Step 1b:} Update
\begin{equation}
\label{PCDM_step1098}
     x_{k+1} \gets x_k + h_k
\end{equation}
\end{subequations}
\EndFor
  \end{algorithmic}
\end{algorithm}

\section{Links Between DQAM and PCDM} \label{S_Comparison}

In this section we discuss and compare DQAM and PCDM. We highlight some of the main differences between the two methods, and describe a special case where the methods coincide.

\subsection{Fully parallel vs partially parallel updating}
\label{Ssub_parallel}
One of the main differences between DQAM and PCDM is the number of blocks that must be updated at each iteration. At each iteration of DQAM, \emph{all} $n$ blocks must be updated. This highlights the fact that DQAM uses a \emph{fully parallel} update scheme. On the other hand, PCDM is more flexible as it is able to update $\tau$ blocks at each iteration where $1\leq \tau \leq n$. This is beneficial because in practice there are usually fewer processors than the number of blocks. So, PCDM can act as a \emph{serial} method if $\tau = 1$, a \emph{fully parallel} method if $\tau = n$, or it can be optimized to the number of processors $p$ (so $\tau = p$). The advantages of updating $\tau = p$ blocks at each iteration of PCDM is established theoretically in Section~\ref{S_Complexity}.

Because DQAM updates all $n$ blocks at each iteration, it is a Jacobi type method, whereas PCDM can be interpreted as a Jacobi type method when $\tau = n$, a Gauss-Seidel type method when $\tau = 1$, or a hybrid Jacobi-Gauss-Seidel method  when $1 < \tau < n$.

\subsection{Flexibility of PCDM}

PCDM can be applied to a general convex composite function. Specifically, $f$ is only assumed to be smooth and convex. Further, the algorithm is guaranteed to converge when applied to a general smooth convex function, and can be equipped with iteration complexity bounds (see \cite{Richtarik12a}). On the other hand, the convergence results for DQAM have been only derived under the assumption that $f$ is quadratic and strongly convex; there are no convergence guarantees for a function $f$ with any other structure. Complexity estimates for both methods are discussed in detail in Section~\ref{S_Complexity}.

Notice that DQAM has been tailored specifically for an augmented Lagrangian objective function so it is reasonable that the function $f$ is assumed to be quadratic and strongly convex in this context. However, this assumption restricts the range of problems that can be solved using DQA, while PCDM can be applied to a much wider class of problems.

\subsection{Approximation type and algorithm philosophy}

In DQAM, a local two-sided approximation to the cross products is employed. The error associated with the approximation is of the order $o(\|h\|_2^2)$, which explains that, if the update $h_k$ is too large, then the model loses accuracy. This justifies the need for a correction step \eqref{DQA_step1c} so as to ensure that $x_{k+1}$ is not too far from $x_k$. This ensures a reduction in the objective value and ultimately, algorithm convergence. The need for a correction scheme within DQAM  is also apparent from the finite differences formulation presented in Algorithm~\ref{A_DQA-FD}. Consider the summation in \eqref{DQAFD_step1a}, and for simplicity assume that $\Psi \equiv 0$. Then the block update $h_k\ii$ is that which minimizes the function value difference in the $i$-th block coordinate direction, independently of all the other blocks $j \neq i$. Clearly, this will \emph{not} guarantee that $F(x_k + h_k) \leq F(x_k)$ because the function $F$ is not block separable. A simple 2D quadratic example showing that this approach is doomed to fail was described in
\cite{ChicagoICML13}.

In contrast to the DQAM scheme, PCDM employs a one-sided \emph{global} expected separable overapproximation of the augmented Lagrangian function \eqref{E_FaugLag}, which guarantees to produce a new random iterate $x_{k+1}$ that, on average,  decreases the objective function. That is, $x_{k+1}$ satisfies $\Exp [F(x_{k+1}) \;|\; x_k] \leq F(x_k)$. It turns out that this is sufficient to obtain a high probability complexity result and therefore there is no need for a correction step in PCDM. In fact, as we shall see in Section~\ref{S_97d070dsds}, a ``correction step'' is already embedded in the approximation in the form of the ESO parameter $\beta$.

%Note that many algorithms use local to make a difficult problem more easy to solve. One example is Newton's method, which applies a quadratic approximation to a (possibly) nonquadratic problem. This approximation is local, and convergence is only guaranteed when the initial point is ``close enough'' to the solution.

Note that, besides DQAM, there are  many other algorithms that follow a ``step-then-correct'' strategy. One example are trust region methods, where a solution to some subproblem is found, the ``goodness'' of the solution is measured, and then the size of the trust region is adjusted to reflect the ``goodness''. A second example is the conditional gradient algorithm, which builds a linear approximation to the objective function, finds the minimizer of the linearized problem (the ``step'') and then ``corrects'' by taking a convex combination of the previous point and the step to reduce the objective value. This correction step is implicitly built-in for PCDM, in the choice of the constant $\beta$.

\subsection{A special case in which the methods coincide}\label{S_97d070dsds}

So far we have highlighted some of the differences between DQAM and PCDM. However, in this section we present a special case where the two methods coincide.

\begin{theorem}\label{T_coincide} Assume $f$ is  partially separable of degree $\omega$, and has block Lipschitz gradient with constants $L_1,L_2,\dots,L_n>0$. Further, assume $\Psi\equiv 0$. Then Algorithm~\ref{A_DQA-Hessian} (generalization of DQAM) coincides with Algorithm~\ref{A_PCD_DSO} (fully parallel PCDM) under the following choice of parameters:
\begin{equation}\label{eq:ds98dsnds}C_i(x_k) \equiv L_i B_i \quad (i=1,2,\dots,n), \qquad \theta = \tfrac{1}{\omega}.\end{equation}

% DQAM (Algorithm~\ref{A_DQA})
\end{theorem}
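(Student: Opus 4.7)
The plan is to write down what each algorithm does under the hypotheses of the theorem, compute the minimizers in closed form (both subproblems become strictly convex block-separable quadratics when $\Psi\equiv 0$), and then check that the composite updates $x_{k+1} - x_k$ agree.

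First I would instantiate Algorithm~\ref{A_DQA-Hessian}: with $\Psi\equiv 0$ and $C_i(x_k)=L_i B_i$, Step 1a reduces to
\[
h_k^{\text{DQA}} \gets \arg\min_{h\in\R^N}\left\{\ve{f'(x_k)}{h} + \tfrac12 \sum_{i=1}^n L_i \ve{B_i h^{(i)}}{h^{(i)}}\right\},
\]
while Algorithm~\ref{A_PCD_DSO} gives (again with $\Psi\equiv 0$)
\[
h_k^{\text{PCDM}} \gets \arg\min_{h\in\R^N}\left\{\ve{f'(x_k)}{h} + \tfrac{\omega}{2} \sum_{i=1}^n L_i \ve{B_i h^{(i)}}{h^{(i)}}\right\}.
\]
Both problems are block-separable, strictly convex quadratics (since each $B_i$ and each $L_i$ is positive), so the block-wise first-order optimality condition yields unique minimizers in closed form. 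The crucial observation is that the only difference between the two quadratics is a global factor of $\omega$ on the quadratic penalty, so the minimizers satisfy $h_k^{\text{PCDM}} = \tfrac{1}{\omega} h_k^{\text{DQA}}$; block-wise, $(h_k^{\text{DQA}})^{(i)} = -L_i^{-1} B_i^{-1}(f'(x_k))^{(i)}$ and $(h_k^{\text{PCDM}})^{(i)}$ is the same thing divided by $\omega$.

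Finally I would compose these directions with the update rules. Algorithm~\ref{A_PCD_DSO} sets $x_{k+1} = x_k + h_k^{\text{PCDM}} = x_k + \tfrac{1}{\omega} h_k^{\text{DQA}}$, whereas Algorithm~\ref{A_DQA-Hessian} sets $x_{k+1} = (1-\theta)x_k + \theta(x_k + h_k^{\text{DQA}}) = x_k + \theta h_k^{\text{DQA}}$, which for $\theta = 1/\omega$ is exactly the same vector. Since both algorithms produce the same iterate from the same $x_k$, an induction on $k$ (starting from a common $x_0$) finishes the proof.

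There is no real obstacle here; the statement is essentially a reparametrization identity. The only thing worth flagging is that the scaling factor $\omega$ multiplying the quadratic penalty in the PCDM subproblem (which comes from the ESO parameter $\beta=\omega$ produced by Theorem~\ref{T_ESO_omega}) gets inverted when one solves the subproblem, and this inversion matches exactly the correction step size $\theta=1/\omega$ in DQAM. This is the content of the closing remark in Section~\ref{S_97d070dsds} that the ``correction step'' of DQAM is already absorbed into $\beta$ in PCDM.
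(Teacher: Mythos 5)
Your proposal is correct and follows essentially the same route as the paper's proof: both compute the closed-form block-wise minimizers of the two separable quadratic subproblems and observe that the $1/\omega$ scaling of the PCDM step exactly matches the DQAM correction step $\theta = 1/\omega$, yielding the identical update $x_{k+1}^{(i)} = x_k^{(i)} - \tfrac{1}{\omega L_i}B_i^{-1}(f'(x_k))^{(i)}$. The explicit scaling relation $h_k^{\text{PCDM}} = \tfrac{1}{\omega}h_k^{\text{DQA}}$ and the closing induction are just slightly more spelled-out versions of what the paper does.
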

\begin{proof} In Algorithm~\ref{A_DQA-Hessian} we have
$x_{k+1} = (1-\theta)x_k + \theta (x_k + h_k)$, where
\begin{equation}\label{eq:d989ds8d}h_k = \arg \min_{h\in \R^N} \{\ve{f'(x_k)}{h} + \tfrac{1}{2}\sum_{i=1}^n \ve{C_i(x_k) h^{(i)}}{h^{(i)}}\}.\end{equation}
Due to separability of the objective function in \eqref{eq:d989ds8d} and the choice of parameters \eqref{eq:ds98dsnds}, we see that $h_k^{(i)} = -\tfrac{1}{L_i}B_i^{-1}(f'(x_k))^{(i)}$, $i=1,2,\dots,n$, and hence
\begin{equation}\label{eq:09u0dsds}x_{k+1}^{(i)} = (1-\theta)x_k^{(i)} + \theta (x_k^{(i)} + h_k^{(i)}) = x_k^{(i)} -  \tfrac{1}{\omega L_i}B_i^{-1}(f'(x_k))^{(i)}.\end{equation}

In Algorithm~\ref{A_PCD_DSO} we have
$x_{k+1}= x_k + h_k$, where
 \begin{equation}\label{eq:d989adndc}h_k = \arg \min_{h\in \R^N} \left\{\ve{f'(x_k)}{h} + \frac{\omega}{2}\sum_{i=1}^n \ve{L_i B_i h^{(i)}}{h^{(i)}}\right\}.\end{equation}
Using separability of the objective function in \eqref{eq:d989adndc}, we again obtain the same formula \eqref{eq:09u0dsds} for $x_{k+1}$, establishing the equivalence of the two methods.
\end{proof}

A few remarks:

\begin{itemize}
\item
In the context of the original problem  \eqref{Eq_problem}, the case covered by the above theorem corresponds to a feasibility problem ($\Psi\equiv 0$ means that $g \equiv0$).

\item DQAM was analyzed in \cite{Ruszczynski95} only for the parameter $\theta$ in the interval $(0,\tfrac{1}{2(\omega-1)})$. For $\omega>1$ this leads to \emph{smaller steps} than the PCDM default choice $\theta = \tfrac{1}{\omega}$, which then translates to slower convergence for DQAM.
\end{itemize}

\section{Complexity of DQAM and PCDM under Strong Convexity} \label{S_Complexity}

In this section we study and compare the convergence rates of DQAM and PCDM under the assumption of strong convexity of the objective function. We limit ourselves to this case as complexity estimates for DQAM are not available otherwise. Both DQAM and PCDM benefit from linear convergence, but the rate is much better for PCDM than for DQA.

\paragraph{Strong convexity.} We assume that $F$ is strongly convex with respect to the norm $\| \cdot \|_w$ for some vector of positive weights $w=(w_1,\dots,w_n)$ specified in the results, with (strong) convexity parameter $\mu_F>0$. A function $\phi: \R^N \to \R \cup \{+ \infty\}$ is strongly convex with respect to the norm $\| \cdot \|_w$ with convexity parameter $\mu_\phi = \mu_{\phi}(w) \geq 0$ if for all $x,y \in \dom \phi$,
\begin{equation}
\label{strongly_convex_1}
     \phi(y) \geq \phi(x) + \langle \phi^{\prime}(x),y-x \rangle + \frac{\mu_{\phi}}{2}\|y-x\|_w^2,
\end{equation}
where $\phi^{\prime}(x)$ is any subgradient of $\phi$ at $x$. The case with $\mu_{\phi}(w)= 0$ reduces to convexity. It will be useful to note that for any $t>0$,
\begin{equation}\label{eq:d9jdshdsdkk}\mu_\phi(tw) = \frac{\mu_\phi(w)}{t}.\end{equation}

Strong convexity of $F$ may come from $f$ or $\Psi$ or both and we will write $\mu_f$ (resp. $\mu_{\Psi}$) for the strong convexity parameter of $f$ (resp. $\Psi$). It is easy to see that
\begin{equation}
     \label{strongly_convex_4}
\mu_F \geq \mu_f + \mu_\Psi.
\end{equation}

Note that the strong convexity constant of $F$ can be \emph{arbitrarily larger} than the sum of the strong convexity constants of the functions $f$ and $\Psi$. Indeed, consider the following simple 2D example ($N=n=2$): $f(x) = \tfrac{\mu}{2}(x^{(1)})^2$, $\Psi(x) = \tfrac{\mu}{2}(x^{(2)})^2$, where $\mu>0$. Let $\|x\|_w$ be the standard Euclidean norm (i.e., $B_i=1$ and $w_i=1$ for $i=1,2$). Clearly, neither $f$ nor $\Psi$ is strongly convex ($\mu_f=\mu_\Psi=0$). However, $F$ is strongly convex with constant $\mu_F=\mu$.

In the rest of the section we will repeatedly use the following simple result.

\begin{lemma} \label{eq:987a9s87a}Let $\xi_0>\epsilon>0$ and $\gamma\in (0,1)$. If $k\geq \tfrac{1}{\gamma}\log\left(\tfrac{\xi_0}{\epsilon}\right)$, then
$(1-\gamma)^k \xi_0 \leq \epsilon$.
\end{lemma}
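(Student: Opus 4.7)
The plan is to reduce this to the standard inequality $1-\gamma \leq e^{-\gamma}$, which holds for all real $\gamma$ (with equality only at $\gamma = 0$) and in particular for $\gamma \in (0,1)$. This can be justified, if desired, by noting that $e^{x}$ is convex and its tangent line at $x=0$ is $1+x$, so $e^{-\gamma} \geq 1 - \gamma$.

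First, I would raise both sides to the $k$-th power (legitimate since both are positive) to obtain $(1-\gamma)^k \leq e^{-\gamma k}$. Multiplying by $\xi_0 > 0$ yields $(1-\gamma)^k \xi_0 \leq e^{-\gamma k} \xi_0$. Thus it suffices to show that $e^{-\gamma k}\xi_0 \leq \epsilon$ under the assumed lower bound on $k$.

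Next, taking natural logarithms, the desired inequality $e^{-\gamma k}\xi_0 \leq \epsilon$ is equivalent to $\gamma k \geq \log(\xi_0/\epsilon)$, which (since $\gamma > 0$) is equivalent to $k \geq \tfrac{1}{\gamma}\log(\xi_0/\epsilon)$. This is precisely the hypothesis on $k$, so the chain closes and the lemma follows. Note the hypothesis $\xi_0 > \epsilon$ ensures $\log(\xi_0/\epsilon) > 0$, so the stated lower bound on $k$ is a positive real number and the statement is nontrivial.

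There is no real obstacle here; the only subtlety is invoking $1-\gamma \leq e^{-\gamma}$, which is a one-line convexity argument. Everything else is algebraic manipulation.
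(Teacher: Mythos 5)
Your proof is correct and follows essentially the same route as the paper: both arguments reduce to the bound $(1-\gamma)^k \leq e^{-\gamma k}$ (the paper writes it as $(1-\tfrac{1}{1/\gamma})^{(1/\gamma)(\gamma k)} \leq e^{-\gamma k}$, you justify it via the tangent-line inequality $1-\gamma \leq e^{-\gamma}$) and then use the hypothesis $k \geq \tfrac{1}{\gamma}\log(\xi_0/\epsilon)$ to conclude $e^{-\gamma k}\xi_0 \leq \epsilon$. No gaps; the argument is complete.
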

\begin{proof}
$(1-\gamma)^k \xi_0 = (1-\tfrac{1}{1/\gamma})^{(1/\gamma)(\gamma k)} \xi_0 \leq e^{-\gamma k} \xi_0 \leq e^{-\log(\xi_0/\epsilon)}\xi_0 = \epsilon.$
\end{proof}

\subsection{PCDM}

We now derive a new improved complexity result for PCDM. In \cite[Theorem 20]{Richtarik12a}  the authors prove an iteration complexity bound based on the assumption that $\mu_f+\mu_\Psi>0$. Here we obtain a new and tighter complexity result under the weaker assumption  $\mu_F>0$. As discussed above, $\mu_F$ can be substantially bigger than $\mu_f+\mu_\Psi$, which implies that our complexity bound can be much better.

The following auxiliary result is an improvement on Lemma~17(ii) in \cite{Richtarik12a} and will be used in the proof of our main complexity result.

%The next results gives the rate at which $H_{\beta,w}(x,h(x))$ tends to $F^*$ in terms of the difference between the current function value and $F^*$.
\begin{lemma}\label{eq:ds0nds6bsd}
     If $\mu_F(w) >0$ and $\beta \geq \mu_f(w)$, then for all $x \in\dom F$
\begin{equation}\label{eq:d9bnds87dsd}
     H_{\beta,w}(x+h(x)) -F^*\leq \frac{\beta - \mu_f(w)}{\mu_F(w)+\beta - \mu_f(w)}(F(x)-F^*).
\end{equation}
\end{lemma}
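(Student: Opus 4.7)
The plan is to upper bound $H_{\beta,w}(x+h(x))$ using the fact that $h(x)$ is the minimizer of $h\mapsto H_{\beta,w}(x+h)$, then pick a clever test direction and optimize over it. The natural test direction is a convex combination pointing toward a minimizer $x^*$ of $F$: for any $\lambda\in[0,1]$, optimality of $h(x)$ gives
$$H_{\beta,w}(x+h(x)) \;\leq\; H_{\beta,w}\bigl(x + \lambda(x^*-x)\bigr).$$
Expanding the right-hand side via \eqref{H}, the $\Psi$ contribution is $\Psi((1-\lambda)x+\lambda x^*)$, which pairs naturally with an $f$ term — but only if we can replace the linearization $f(x)+\lambda\langle f'(x),x^*-x\rangle$ by $f((1-\lambda)x+\lambda x^*)$.

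This replacement is precisely where strong convexity of $f$ with constant $\mu_f(w)$ enters: applying \eqref{strongly_convex_1} at the base point $x$ evaluated at $(1-\lambda)x+\lambda x^*$ yields
$$f(x) + \lambda\langle f'(x),x^*-x\rangle \;\leq\; f\bigl((1-\lambda)x+\lambda x^*\bigr) \;-\; \tfrac{\mu_f(w)\,\lambda^2}{2}\|x^*-x\|_w^2.$$
Adding the $\Psi$ term and the quadratic $\tfrac{\beta\lambda^2}{2}\|x^*-x\|_w^2$ from $H_{\beta,w}$, I obtain
$$H_{\beta,w}\bigl(x+\lambda(x^*-x)\bigr) \;\leq\; F\bigl((1-\lambda)x+\lambda x^*\bigr) + \tfrac{(\beta-\mu_f(w))\lambda^2}{2}\|x^*-x\|_w^2.$$
Now I invoke strong convexity of $F$ with constant $\mu_F(w)$ on the Jensen-type inequality for $F((1-\lambda)x+\lambda x^*)$, which produces $(1-\lambda)F(x)+\lambda F^* - \tfrac{\mu_F(w)\,\lambda(1-\lambda)}{2}\|x^*-x\|_w^2$. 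Subtracting $F^*$ from both sides gives
$$H_{\beta,w}(x+h(x)) - F^* \;\leq\; (1-\lambda)(F(x)-F^*) + \tfrac{\lambda}{2}\|x^*-x\|_w^2\bigl[(\mu_F(w)+\beta-\mu_f(w))\lambda - \mu_F(w)\bigr].$$

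The final step is to kill the $\|x^*-x\|_w^2$ term — which has indeterminate sign — by choosing
$$\lambda^\star \;=\; \frac{\mu_F(w)}{\mu_F(w)+\beta-\mu_f(w)}.$$
Under the hypotheses $\mu_F(w)>0$ and $\beta\geq \mu_f(w)$ the denominator is strictly positive and $\lambda^\star\in(0,1]$, so this choice is admissible. Substituting gives $1-\lambda^\star = (\beta-\mu_f(w))/(\mu_F(w)+\beta-\mu_f(w))$ and the bracketed term vanishes, yielding exactly \eqref{eq:d9bnds87dsd}. The only mild subtlety I anticipate is verifying $\lambda^\star\in[0,1]$ (and handling the edge case $F(x)=F^*$, where the inequality is trivial); the core of the argument is the two-step strong-convexity bookkeeping, where the $-\mu_f(w)\lambda^2$ coming from strong convexity of $f$ partially cancels the penalty $\beta\lambda^2$ from $H_{\beta,w}$, leaving the sharp coefficient $\beta-\mu_f(w)$ rather than $\beta$.
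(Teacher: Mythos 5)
Your proof is correct and follows essentially the same route as the paper's: bound $H_{\beta,w}(x+h(x))$ by $F(y)+\tfrac{\beta-\mu_f(w)}{2}\|y-x\|_w^2$ for $y=\lambda x^*+(1-\lambda)x$, apply strong convexity of $F$ along the segment, and choose $\lambda^*=\mu_F(w)/(\mu_F(w)+\beta-\mu_f(w))$ to annihilate the quadratic term. The only difference is that the paper obtains the first bound by citing Lemma~16 of \cite{Richtarik12a}, whereas you re-derive it (restricted to the segment) from the minimality of $h(x)$ together with strong convexity of $f$, which makes your argument self-contained but not substantively different.
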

\begin{proof}
 Let $\mu_F = \mu_F(w)$, $\mu_f = \mu_f(w)$ and $\mu_\Omega = \mu_\Omega(w)$. By Lemma~16 in \cite{Richtarik12a}, we have
     \begin{equation} \label{eq:d09usdds}H_{\beta,w}(x+h(x)) \leq \min_{y \in \R^N} \left\{F(y) + \frac{\beta - \mu_f}{2}\|y-x\|_w^2\right\}.\end{equation}
Using this, we can further write
\begin{eqnarray}
     H_{\beta,w}(x+h(x)) &\overset{\eqref{eq:d09usdds}}{\leq} & \min_{y  = \lambda x^* + (1-\lambda)x,\; \lambda\in [0,1]} \left\{F(y) + \frac{\beta - \mu_f}{2}\|y-x\|_w^2\right\}\notag\\
&=& \min_{\lambda \in [0,1]} \left\{F(\lambda x^* + (1-\lambda)x) + \frac{(\beta - \mu_f)\lambda^2}{2}\|x-x^*\|_w^2\right\} \notag\\
&\leq& \min_{\lambda \in [0,1]} \left\{\lambda F^* + (1-\lambda)F(x) - \frac{\mu_F \lambda(1-\lambda)-(\beta - \mu_f)\lambda^2}{2}\|x-x^*\|_w^2\right\},\label{eq:09dsnsd8sd}
\end{eqnarray}
where in the last step we have used strong convexity of $F$. Notice that $\lambda^* \eqdef \mu_F/(\mu_F + \beta - \mu_f) \in (0,1]$ and that  $\mu_F(1-\lambda^*)-(\beta - \mu_f)\lambda^*=0$. It now only remains to substitute $\lambda^*$ into \eqref{eq:09dsnsd8sd} and subtract $F^*$ from the resulting inequality.
\end{proof}

We now present our main complexity result. It gives a bound on the number of iterations  required by PCDM (Algorithm~\ref{A_PCD_ESO}) to obtain an $\epsilon$ solution with high probability. The result is generic in the sense that it applies to any smooth convex function and proper uniform sampling as long as the parameters $\beta$ and $w$ giving rise to an ESO are known.

\begin{theorem}
\label{Thm_complexity}
     Assume that $F=f+\Psi$ is strongly convex with respect to the norm $\|\cdot\|_w$ ($\mu_F(w) >0$) and let $S_0,S_1,\dots$ be iid proper uniform samplings satisfying
     \[(f,S_0) \sim ESO(\beta,w).\]
     Choose an initial point $x_0 \in \R^N$, target confidence level $\rho \in (0,1)$, target accuracy level $0<\epsilon<F(x_0)-F^*$ and iteration counter
\begin{equation}
\label{D_K}
     K \geq \frac{n}{\Exp[|S_0|]}\frac{\beta + \mu_F(w) -\mu_f(w)}{\mu_F(w)}\log \left(\frac{F(x_0)-F^*}{\epsilon \rho}\right).
\end{equation}
 If $\{x_k\}$, $k \geq 0$, are the random points generated by PCDM (Algorithm~\ref{A_PCD_ESO}) as applied to problem \eqref{E_FaugLag}, then \[\Prob(F(x_K)-F^*\leq \epsilon)\geq 1-\rho.\]
\end{theorem}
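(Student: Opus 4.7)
The plan is to combine the ESO inequality with the newly proven Lemma~\ref{eq:ds0nds6bsd} to extract a per-iteration contraction in expectation, and then apply Markov's inequality together with Lemma~\ref{eq:987a9s87a} to convert this into the high-probability iteration bound.

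First, I would fix an iterate $x_k$ and set $h_k = h(x_k) = \arg\min_h H_{\beta,w}(x_k+h)$, which is exactly what PCDM computes in Step~1a. By the definition of ESO applied to $h_k$,
\[
\Exp[F(x_{k+1})\mid x_k]
\;\leq\;
\left(1-\tfrac{\Exp[|S_0|]}{n}\right)F(x_k) + \tfrac{\Exp[|S_0|]}{n}\,H_{\beta,w}(x_k+h_k),
\]
which uses the identity \eqref{eq:09s0asjdxxXX} together with the fact that $h_k$ minimises the right-hand side. Subtracting $F^*$ and then invoking Lemma~\ref{eq:ds0nds6bsd} on the $H$ term (this is the place where $\beta \geq \mu_f(w)$ must be verified; it is a standard consequence of the ESO inequality that I would spell out by choosing $h=h(x)$ and comparing with the strong-convexity lower bound), I obtain
\[
\Exp[F(x_{k+1})-F^*\mid x_k]
\;\leq\;
\left[\,1-\tfrac{\Exp[|S_0|]}{n}\Bigl(1-\tfrac{\beta-\mu_f(w)}{\mu_F(w)+\beta-\mu_f(w)}\Bigr)\right](F(x_k)-F^*).
\]
A short algebraic simplification collapses the bracket to $1-\gamma$, where
\[
\gamma \;=\; \tfrac{\Exp[|S_0|]}{n}\cdot\tfrac{\mu_F(w)}{\mu_F(w)+\beta-\mu_f(w)} \in (0,1].
\]

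Next, by the tower property and induction on $k$, the contraction propagates to
\[
\Exp[F(x_K)-F^*] \;\leq\; (1-\gamma)^K (F(x_0)-F^*).
\]
Applying Lemma~\ref{eq:987a9s87a} with $\xi_0 = F(x_0)-F^*$ and target accuracy $\epsilon\rho$, the choice of $K$ in \eqref{D_K} is precisely $\tfrac{1}{\gamma}\log(\xi_0/(\epsilon\rho))$, which yields $\Exp[F(x_K)-F^*]\leq \epsilon\rho$. A final application of Markov's inequality to the nonnegative random variable $F(x_K)-F^*$ gives
\[
\Prob(F(x_K)-F^* > \epsilon) \;\leq\; \tfrac{\Exp[F(x_K)-F^*]}{\epsilon} \;\leq\; \rho,
\]
which is the desired high-probability conclusion.

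The main obstacle is not the recursion itself, which is routine once Lemma~\ref{eq:ds0nds6bsd} is in hand, but rather ensuring all the ingredients line up cleanly: (i) the condition $\beta\geq\mu_f(w)$ required by the lemma, (ii) the algebraic manipulation that converts the convex combination of $1$ and $\tfrac{\beta-\mu_f}{\mu_F+\beta-\mu_f}$ into a single clean contraction factor whose reciprocal matches the factor $\tfrac{\beta+\mu_F-\mu_f}{\mu_F}$ appearing in \eqref{D_K}, and (iii) the fact that $F(x_k)-F^*$ is a nonnegative random variable so that Markov's inequality applies directly without needing to control higher moments.
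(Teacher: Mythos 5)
Your proposal is correct and follows essentially the same route as the paper's own proof: use \eqref{eq:09s0asjdxxXX} with the minimizer $h(x_k)$, apply Lemma~\ref{eq:ds0nds6bsd} to get the contraction factor $1-\gamma$ with $\gamma = \tfrac{\Exp[|S_0|]}{n}\cdot\tfrac{\mu_F(w)}{\mu_F(w)+\beta-\mu_f(w)}$, iterate via the tower property, and finish with Lemma~\ref{eq:987a9s87a} and Markov's inequality. Your noted obligation that $\beta\geq\mu_f(w)$ is handled in the paper exactly as you suggest, as a consequence of the ESO property (cited from \cite{Richtarik12a}).
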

\begin{proof}
Let $\alpha = \frac{\E[|S_0|]}{n}$ and $\xi_k = F(x_k) - F^*$. Then for all $k\geq 0$,
\begin{equation}
\label{E_sc}
     \E[\xi_{k+1}\;|\;x_k] \overset{\eqref{eq:09s0asjdxxXX} }{\leq} (1-\alpha)\xi_k + \alpha(H_{\beta,w}(x_k+h(x_k)) - F^*)\overset{\text{(Lemma~\ref{eq:ds0nds6bsd})}}{\leq} \Big(1 - \underbrace{\tfrac{\alpha\mu_F(w)}{\mu_F(w) + \beta-\mu_f(w)}}_{\eqdef \gamma}\Big)\xi_k.
\end{equation}
Note that Lemma~\ref{eq:ds0nds6bsd} is applicable as the assumption $\beta\geq \mu_f(w)$ is satisfied due to the fact that $(f,\hat{S})\sim ESO(\beta,w)$ (see \cite[Section~4]{Richtarik12a}). Further, note that $\gamma>0$ since $\alpha>0$ and $\mu_F(w)>0$. Moreover, $\gamma\leq 1$ since $\alpha \leq 1$ and $\beta \geq \mu_f(w)$.  By taking expectation in $x_k$ through \eqref{E_sc}, we obtain $\E[\xi_k]\leq (1-\gamma)^k \xi_0 $. Applying Markov inequality, Lemma~\ref{eq:987a9s87a} and \eqref{D_K}, we obtain
\begin{equation*}
     \Prob(\xi_K > \epsilon) \leq \frac{\E[\xi_K]}{\epsilon} \leq \frac{(1-\gamma)^K\xi_0}{\epsilon} \leq \rho,
\end{equation*}
establishing the result.
\end{proof}

In order to compare the complexity of PCDM with that of DQAM, which is a fully parallel method, we now derive a specialized complexity result for the fully parallel variant of PCDM (Algorithm~\ref{A_PCD_DSO}). The method is no longer stochastic in this situation, i.e., the sequence of vectors $\{x_k\}$, $k\geq 0$, is deterministic. Hence, we give a standard complexity result as opposed to a high probability one. Finally, we make use of the fact that for partially separable functions $f$, the parameters $\beta$ and $w$ are known.

\begin{theorem}\label{Thm_complexity2} Assume $f: \R^N\to \R$ is  partially separable of degree $\omega$, and has block Lipschitz gradient with constants $L_1,L_2,\dots,L_n>0$. Further assume that $F=f+\Psi$ is strongly convex with $\mu_F(L) >0$, where $L=(L_1,\dots,L_n)$. Finally, let $\{x_k\}_{k\geq 0}$ be the sequence generated by fully parallel PCDM (Algorithm~\ref{A_PCD_DSO}). Then for all $k\geq 0$,
\begin{equation}\label{eq:sa9hs8sd}
     F(x_{k+1})-F^* \leq q^{\text{\tiny PCDM}} (F(x_{k})-F^*),
\end{equation}
where
\begin{equation}
\label{E_q2}
     q^{\text{\tiny PCDM}} = 1-\frac{\mu_F(L)}{\omega + \mu_F(L)-\mu_f(L)}.
\end{equation}
Moreover, if we let $\epsilon< F(x_0)-F^*$ and \begin{equation}\label{eq:d9000sdsd}k\geq \frac{1}{1-q^{\text{\tiny PCDM}}} \log \left(\frac{F(x_0)-F^*}{\epsilon}\right),\end{equation} then $F(x_k)-F^* \leq \epsilon$.
\end{theorem}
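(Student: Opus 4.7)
The plan is to specialize Theorem~\ref{Thm_complexity} to the fully parallel setting. Observe that Algorithm~\ref{A_PCD_DSO} is exactly the instance of Algorithm~\ref{A_PCD_ESO} corresponding to the $n$-nice sampling, for which $\hat{S}=\{1,\dots,n\}$ with probability $1$ and hence $\Exp[|S_0|]=n$. Under the partial separability and block Lipschitz hypotheses, Theorem~\ref{T_ESO_omega} with $\tau=n$ yields $(f,\hat{S})\sim ESO(\beta,w)$ with
\[
\beta = 1+\frac{(\omega-1)(n-1)}{\max\{1,n-1\}} = \omega \qquad \text{and} \qquad w=L.
\]
So all the preconditions of Theorem~\ref{Thm_complexity} are met (with $w=L$).

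Next I would simply replay the key inequality \eqref{E_sc} from the proof of Theorem~\ref{Thm_complexity}. Writing $\xi_k = F(x_k)-F^*$, that proof established
\[
\Exp[\xi_{k+1}\mid x_k] \leq \left(1-\frac{\alpha\mu_F(w)}{\mu_F(w)+\beta-\mu_f(w)}\right)\xi_k,
\]
where $\alpha = \Exp[|S_0|]/n$. Because the $n$-nice sampling is deterministic, the conditional expectation on the left is just $\xi_{k+1}$, $\alpha=1$, and plugging in $\beta=\omega$, $w=L$ gives exactly
\[
\xi_{k+1} \leq \left(1-\frac{\mu_F(L)}{\omega+\mu_F(L)-\mu_f(L)}\right)\xi_k = q^{\text{\tiny PCDM}}\,\xi_k,
\]
which is \eqref{eq:sa9hs8sd}. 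Note that $q^{\text{\tiny PCDM}}\in(0,1)$ since $\mu_F(L)>0$ and the ESO condition forces $\omega\geq \mu_f(L)$.

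For the iteration complexity statement, I would iterate the above bound to obtain $\xi_k \leq (q^{\text{\tiny PCDM}})^k \xi_0$ and then invoke Lemma~\ref{eq:987a9s87a} with $\gamma = 1-q^{\text{\tiny PCDM}}$. The hypothesis \eqref{eq:d9000sdsd} is precisely the condition $k \geq \tfrac{1}{\gamma}\log(\xi_0/\epsilon)$ of that lemma, which yields $\xi_k \leq \epsilon$, as desired.

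There is no real obstacle here: the theorem is a deterministic specialization of Theorem~\ref{Thm_complexity} obtained by choosing the $n$-nice sampling and invoking the ESO constants from Theorem~\ref{T_ESO_omega}. The only minor care needed is to check that $q^{\text{\tiny PCDM}}\in(0,1)$, which follows from $\mu_F(L)>0$ and from $\beta=\omega\geq \mu_f(L)$ (the latter is a generic property of valid ESO constants, noted in the proof of Theorem~\ref{Thm_complexity}).
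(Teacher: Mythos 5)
Your proposal is correct and follows essentially the same route as the paper: instantiate Theorem~\ref{T_ESO_omega} with the $n$-nice sampling to get $\beta=\omega$, $w=L$, observe that the sampling is deterministic so $\alpha=1$ and the recursion from the proof of Theorem~\ref{Thm_complexity} becomes $\xi_{k+1}\leq q^{\text{\tiny PCDM}}\xi_k$, and finish with Lemma~\ref{eq:987a9s87a}. No gaps; your extra check that $\omega\geq\mu_f(L)$ is the same observation the paper makes when invoking Lemma~\ref{eq:ds0nds6bsd}.
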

\begin{proof} Let $\hat{S}$ be the fully parallel sampling, i.e., the $n$-nice sampling. Applying Theorem~\ref{T_ESO_omega}, we see that
$(f,\hat{S}) \sim ESO(\beta,w)$, with $\beta=\omega$ and $w=L$. Following the first part of the proof of Theorem~\ref{Thm_complexity}, we have $\alpha=1$ and
$\xi_{k+1} \leq (1-\gamma)\xi_k$, where $\gamma = \mu_F(L)/(\mu_F(L)+\omega-\mu_f(L))$, establishing \eqref{eq:sa9hs8sd}. The second statement follows directly by applying Lemma~\ref{eq:987a9s87a}.
\end{proof}

\subsection{DQAM}

We now present a complexity result for DQAM, established in \cite{Ruszczynski95}.

\begin{theorem}[Theorem 2 in \cite{Ruszczynski95}]
\label{T_DQArate} Let $f(x)=\tfrac{r}{2}\|b-Ax\|^2$ be partially separable of degree $\omega>1$. Assume that $F$ ($=f+\Psi$) is strongly convex with $\mu_F(e)>0$, where $e \in \R^n$ is the vector of all ones. Further assume that the sets $X_i$, $i = 1,\dots,n$, are bounded. Let $\{x_k\}$, $k\geq 0$, be the sequence generated by DQAM (Algorithm~\ref{A_DQA}) with $\theta = \tfrac{1}{2(\omega-1)}$. Then for all $k\geq 0$,
\begin{equation*}
    F(x_{k+1}) - F^* \leq q^{\text{\tiny DQAM}} \big(F(x_k)-F^* \big),
\end{equation*}
where
\begin{equation}
\label{E_q1}
    q^{\text{\tiny DQAM}} = 1-\frac{\mu_{F}(e)}{16 \lmax (\omega-1)^3 + 4(\omega-1)\mu_F(e)},
\end{equation}
and $\lmax \eqdef \max_{1\leq i \leq n }r\|A_i\|^2$. Moreover, if we let $\epsilon< F(x_0)-F^*$ and \begin{equation}\label{eq:d98usdsd}k\geq \frac{1}{1-q^{\text{\tiny DQAM}}} \log \left(\frac{F(x_0)-F^*}{\epsilon}\right),\end{equation} then $F(x_k)-F^* \leq \epsilon$.
\end{theorem}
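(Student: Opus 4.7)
The first claim is cited as Theorem~2 in \cite{Ruszczynski95}, so the plan is to reconstruct the main structure of that argument in the language we have set up, and then derive the iteration-complexity tail from Lemma~\ref{eq:987a9s87a}. The starting point is the exact identity, valid for quadratic $f$, obtained from \eqref{E_DQA_f_formulation}:
\[
f(x+h)-f^{\text{DQA}}(x+h)=\tfrac{r}{2}\sum_{i\neq j}\langle A_i h^{(i)},A_j h^{(j)}\rangle = \tfrac{r}{2}\bigl(\|Ah\|^2-\sum_i \|A_i h^{(i)}\|^2\bigr).
\]
My first step is to control this error by means of the partial separability of $f$. Writing $\|Ah\|^2=\sum_{j=1}^m (\sum_{i:A_{ji}\neq 0} A_{ji}h^{(i)})^2$ and applying Cauchy--Schwarz inside each row $j$ (which involves at most $\omega$ blocks), one obtains $\|Ah\|^2\le \omega\sum_i \|A_i h^{(i)}\|^2$. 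Combined with $r\|A_i h^{(i)}\|^2\le L' \|h^{(i)}\|^2$ (recall $L'=\max_i r\|A_i\|^2$), this yields a two-sided quadratic comparison of the form $|f(x+h)-f^{\text{DQA}}(x+h)| \le (\omega-1)L' \|h\|_e^2$, which is the basic ingredient Ruszczy\'nski uses to measure how far the separable surrogate can mislead us.

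Next I would exploit the optimality of $h_k$ in the DQA subproblem \eqref{DQA_step1a}: for every admissible test direction $h$,
\[
f^{\text{DQA}}(x_k+h_k)+\Psi(x_k+h_k)\le f^{\text{DQA}}(x_k+h)+\Psi(x_k+h).
\]
The natural test direction is $h=\lambda(x^*-x_k)$ with $\lambda\in[0,1]$, which lies in the feasible set by convexity of $X$. Combining this inequality with convexity of $f,\Psi$ on the right-hand side and with the DQA-error bound from Step~1 on the left-hand side produces an estimate of the form
\[
F(x_k+h_k)\le F(x_k)-c_1\lambda(F(x_k)-F^*)+c_2(\omega-1)L'\lambda^2\|x^*-x_k\|_e^2,
\]
up to the error caused by $f\ne f^{\text{DQA}}$. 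Strong convexity of $F$ in the norm $\|\cdot\|_e$ replaces $\|x^*-x_k\|_e^2$ by $\tfrac{2}{\mu_F(e)}(F(x_k)-F^*)$.

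The correction step \eqref{DQA_step1c} is where $\theta$ enters. Using convexity of $F$, $F(x_{k+1})\le (1-\theta)F(x_k)+\theta F(y_k)$, and substituting the bound on $F(y_k)-F^*=F(x_k+h_k)-F^*$ from the previous step, I would choose the parameters so as to balance the progress term (linear in $\theta$) against the DQA-error term (quadratic in $\theta$, inflated by $(\omega-1)L'$). With the specific choice $\theta=\tfrac{1}{2(\omega-1)}$ the optimization collapses to the contraction factor
\[
q^{\text{\tiny DQAM}}=1-\frac{\mu_F(e)}{16L'(\omega-1)^3+4(\omega-1)\mu_F(e)},
\]
matching \eqref{E_q1}. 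The hardest part of the argument is the careful bookkeeping in this balancing: the cubic $(\omega-1)^3$ factor arises precisely because the $(\omega-1)L'$ DQA-error constant is paid once through the approximation bound and a second time through the restriction $\theta\le\tfrac{1}{2(\omega-1)}$ needed to keep the correction step stable, so losing even a constant here costs a full factor of $\omega-1$.

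The iteration-complexity statement \eqref{eq:d98usdsd} is then immediate: iterating $F(x_{k+1})-F^*\le q^{\text{\tiny DQAM}}(F(x_k)-F^*)$ gives $F(x_k)-F^*\le (q^{\text{\tiny DQAM}})^k (F(x_0)-F^*)$, and Lemma~\ref{eq:987a9s87a} applied with $\gamma=1-q^{\text{\tiny DQAM}}$ and $\xi_0=F(x_0)-F^*$ yields $F(x_k)-F^*\le\epsilon$ whenever $k\ge \tfrac{1}{1-q^{\text{\tiny DQAM}}}\log\!\bigl(\tfrac{F(x_0)-F^*}{\epsilon}\bigr)$, exactly as claimed. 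The boundedness of the sets $X_i$ is needed in the underlying reference to guarantee that the DQA subproblem \eqref{DQA_step1a} has a well-defined minimizer and that the test direction $\lambda(x^*-x_k)$ stays in the feasible set, but it plays no further role in the contraction constant itself.
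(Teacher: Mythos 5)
Two remarks. First, on what the paper itself does: Theorem~\ref{T_DQArate} is not proved in the paper at all --- the contraction estimate with \eqref{E_q1} is quoted from Theorem~2 and Eq.~(5.11) of \cite{Ruszczynski95} (with the optimal $\theta=\tfrac{1}{2(\omega-1)}$ substituted and the notation translated via the dictionary in the appendix); the only ingredient supplied in the paper's own terms is the ``Moreover'' tail, which follows from iterating the contraction and applying Lemma~\ref{eq:987a9s87a} with $\gamma=1-q^{\text{\tiny DQAM}}$. Your last paragraph handles that tail exactly as the paper would, so that part is fine.

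Second, your reconstruction of the contraction itself has a genuine gap at its central step. You test the optimality of $h_k$ in \eqref{DQA_step1a} against $h=\lambda(x^*-x_k)$ and claim an estimate of the form $F(x_k+h_k)\le F(x_k)-c_1\lambda(F(x_k)-F^*)+c_2(\omega-1)\lmax\lambda^2\|x^*-x_k\|_e^2$. But converting the surrogate value at $h_k$ into the true value $F(x_k+h_k)$ costs the two-sided DQA error \emph{at the full step} $h_k$, i.e.\ a term of order $(\omega-1)\tfrac{r}{2}\sum_i\|A_ih_k^{(i)}\|^2\le(\omega-1)\tfrac{\lmax}{2}\|h_k\|_e^2$, which is not controlled by $\lambda^2\|x^*-x_k\|_e^2$ and which you silently drop (``up to the error caused by $f\neq f^{\text{DQA}}$''). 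This is precisely the term that makes the full-step surrogate method fail --- the paper itself notes, via the 2D example of \cite{ChicagoICML13}, that $F(x_k+h_k)\le F(x_k)$ need not hold --- so no bound of the form you wrote can be true in general. Your subsequent route through $F(x_{k+1})\le(1-\theta)F(x_k)+\theta F(y_k)$ cannot repair this: it only attaches a \emph{linear} factor $\theta$ to the uncontrolled error, whereas the actual argument (and the origin of the restriction $\theta\le\tfrac{1}{2(\omega-1)}$) evaluates $F$ at the damped point $x_k+\theta h_k$ directly, so that the nonseparable cross-term error enters with $\theta^2\sum_i\|A_ih_k^{(i)}\|^2$ and can be absorbed by the decrease of the separable surrogate, which by optimality of $h_k$ is itself at least of order $\tfrac{r}{2}\sum_i\|A_ih_k^{(i)}\|^2$ (this is also where boundedness of the $X_i$ and the strong convexity in $\|\cdot\|_e$ enter Ruszczy\'nski's bookkeeping). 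Without this mechanism the balancing you describe does not go through, and in particular the specific constants $16\lmax(\omega-1)^3+4(\omega-1)\mu_F(e)$ in \eqref{E_q1} are asserted rather than derived.
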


Ruszczy\'{n}ski analyzed DQAM for a range of parameters $\theta$: $\theta \in (0,1/(\omega-1))$ \cite[Theorem 1; $\mu=0$]{Ruszczynski95}. However, the choice $\theta = 1/(2(\omega-1))$ is optimal \cite[Eq (5.11)]{Ruszczynski95}, and the above theorem presents Ruszczy\'{n}ski's result for this optimal choice of the stepsize parameter. A table translating the notation used in this paper and \cite{Ruszczynski95} is included in Appendix~B.

\subsection{Comparison of the Linear Rates of DQAM and PCDM} \label{S_complexity_comp}

We  now compare the convergence rates $q^{\text{\tiny DQAM}}$ and $q^{\text{\tiny PCDM}}$ defined in \eqref{E_q1} and \eqref{E_q2}, respectively, and the resulting iteration complexity guarantees. We will argue that $q^{\text{\tiny PCDM}}$ can be much better (i.e., smaller) than $q^{\text{\tiny DQAM}}$,  leading to vastly improved iteration complexity bounds. However, as we shall see, in practice the fully parallel PCDM method and DQAM behave similarly, with PCDM being about twice as fast as DQAM.

Before we start with the comparison, recall from \eqref{eq:08998as8s} that the gradient of $f(x)=\tfrac{r}{2}\|b-Ax\|^2$ (i.e., $f$ covered by Theorem~\ref{T_DQArate}) is block Lipschitz with constants
$L_i = r\|A_i^T A_i\|$, $i=1,2,\dots,n$. Hence, $L' = \max_i L_i$, which draws a link between the quantities $L_i$, $i=1,2,\dots,n$, appearing in Theorem~\ref{Thm_complexity2}  and $L'$ appearing in Theorem~\ref{T_DQArate}.

\begin{itemize}
\item \textbf{Identical Lipschitz constants.} Assume now that $L_i=L'$ for all $i=1,2,\dots,n$ and let $L = (L_1,\dots,L_n)$, as in Theorem~\ref{Thm_complexity2}.
Using \eqref{eq:d9jdshdsdkk} we observe that
\begin{equation}\label{eq:d09udn8}\mu_\phi(L) = \mu_\phi(L'e) = \frac{1}{L'}\mu_\phi(e),\end{equation}
whence
\begin{equation}\label{eq:ndkjsd67675}q^{\text{\tiny PCDM}}  \overset{\eqref{E_q2}+\eqref{eq:d09udn8}}{=} 1-\frac{\mu_F(e)}{L'\omega + \mu_F(e)-\mu_f(e)}.\end{equation}

We can now directly compare $q^{\text{\tiny PCDM}} $ and $q^{\text{\tiny DQAM}}$ by comparing \eqref{eq:ndkjsd67675} and \eqref{E_q1}. Clearly\footnote{This holds as long as $\omega>1$, which is the case covered by Theorem~\ref{T_DQArate} and hence assumed here.},
\begin{equation}\label{eq:jkdsihsd9} 16L'(\omega-1)^3\geq L'\omega \qquad \text{and} \qquad 4(\omega-1)\mu_F(e)\geq \mu_F(e)-\mu_f(e),\end{equation}
and hence $q^{\text{\tiny PCDM}} \leq q^{\text{\tiny DQAM}}$. However, both inequalities in \eqref{eq:jkdsihsd9} can be very loose, which means that $q^{\text{\tiny PCDM}}$ can be much better than $q^{\text{\tiny DQAM}}$.
%The gap between the two increases when $\mu_f(e)$ or $\omega$ (or both) increase.
For instance, in the case when $\mu_F(e)=\mu_f(e)$, we have
\begin{equation}\label{eq:0jds909sdXP}\frac{1-q^{\text{\tiny PCDM}} }{1-q^{\text{\tiny DQAM}} } = \frac{16L'(\omega-1)^3 + 4(\omega-1)\mu_F(e)}{L' \omega} \geq  \frac{16(\omega-1)^3}{\omega}.\end{equation}
In view of \eqref{eq:d9000sdsd} and \eqref{eq:d98usdsd}, this means that the number of DQAM iterations  needed to obtain an $\epsilon$-solution is larger than that for PCDM \emph{by at least the multiplicative factor} $16(\omega-1)^3/\omega$.  For instance, the theoretical iteration complexity of DQAM is more than 1000 times worse than that of PCDM for $\omega=10$.

%\peter{Prove that $q^{\text{\tiny PCDM}} \geq q^{\text{\tiny DQAM}}/2$.}

\item\textbf{Varying Lipschitz Constants.} If the constants $L_1,\dots,L_n$ are not all equal, it is somewhat difficult to compare the complexity rates as we cannot directly compare the strong convexity constants $\mu_\phi(L)$ and $\mu_\phi(e)$ (for $\phi=F$ and $\phi=f$). What we can do, however, is to at least make sure that the ``scaling'' is identical in both. Here is what we mean by that. Recall that  $\mu_\phi(w)$ is the strong convexity constant of $\phi$ wrt a \emph{weighted norm} $\|x\|_w$ defined by \eqref{S_Norms_1}.  As we have remarked in \eqref{eq:d9jdshdsdkk}, if we scale the weights by a positive factor $t>0$, the corresponding strong convexity constant scales by $1/t$. Hence, $\mu_\phi(L)$ and $\mu_\phi(e)$ cannot be considered comparable unless  $\sum_i L_i = \sum_i e_i = n$. Of course, even if this was the case, it is possible that the strong convexity constants might be very different. However, in this case there is at least no reason to suspect a-priori that one might be larger than the other, and hence they are comparable in that sense.

    If we let $\bar{L} = \tfrac{1}{n}\sum_i L_i$ and $w_i = L_i/\bar{L}$ for $i=1,2,\dots,n$, then $\sum_i w_i = n$, and hence, as explained above,
    \begin{equation}\label{eq:09u9adnnmn}\mu_\phi(w)\approx \mu_\phi(e).\end{equation} Furthermore, since $w=L/\bar{L}$, we have
    \[\mu_\phi(L)= \mu_\phi\left( \bar{L}w\right) \overset{\eqref{eq:d9jdshdsdkk}}{=} \frac{1}{\bar{L}} \mu_\phi(w) \approx \frac{1}{\bar{L}} \mu_\phi(e).\]

The above is an analogue of \eqref{eq:d09udn8} and we can therefore now continue our comparison in the same way as we did for the case with identical Lipschitz constants. In particular, if $\mu_F(e) = \mu_f(e)$ (for simplicity), then as above we can argue that
\begin{equation}\label{eq:0jds909sdXP08098}\frac{1-q^{\text{\tiny PCDM}} }{1-q^{\text{\tiny DQAM}} } = \frac{16L'(\omega-1)^3 + 4(\omega-1)\mu_F(e)}{\bar{L} \omega} \geq  \frac{16(\omega-1)^3}{\omega}\frac{L'}{\bar{L}}.\end{equation}
Therefore, PCDM has an  even more dramatic theoretical advantage compared to DQAM in the case when the maximum Lipschitz constant $L'$ is much larger than the average $\bar{L}$.

\end{itemize}

\subsection{Optimal number of block updates} \label{S_OPT}
%%%%%%%%%%%%%%%%%%%%%%%%%%%%%%%%%%%%%%%%%%%%%%%%%%%%%%%%%%%%%%%%%%%%%%%%%%%%%%%%%%%%%%%%

In this section we propose a simplified model of parallel computing and in it study the
performance of a family of parallel coordinate descent methods parameterized by a single parameter:
the number of blocks being updated in a single iteration.

%\subsection{A model of parallel computing}

In particular, consider the family of PCDMs where $S_k$ is a $\tau$-nice sampling and $\tau\in \{1,2,\dots,n\}$. Now assume we have $p \in \{1,2,\dots,n\}$ processors/threads available, each able to compute and apply to the current iterate the update $h^{(i)}(x_k)$ for a single block $i$, in a unit of time. PCDM, as analyzed, is a synchronous method. That is, a new parallel iteration can only start
once the previous one is finished, and hence updating $\tau$ blocks will take $\lceil \tfrac{\tau}{p}\rceil$ amount of time. On the other hand,
 the iteration complexity of PCDM is better for higher $\tau$. Indeed, by Theorem~\ref{T_ESO_omega},  $f$ satisfies an ESO with respect to $\hat{S}$ with parameters $w = L= (L_1,\dots,L_n)$ and $\beta = \beta(\tau) = 1+ \tfrac{(\omega-1)(\tau-1)}{n-1}$,
where $\omega$ is degree of partial separability of $f$ (we assume $n>1$). If, moreover, $\mu_F(L) = \mu_f(L)$, which is often the case as $\Psi$ is often not strongly convex, then Theorem~\ref{Thm_complexity} says that PCDM needs $\tfrac{ n}{\tau}\beta(\tau)c$ iterations, where $c$ is a constant independent of $\tau$, to solve \eqref{E_FaugLag} with high probability. Hence, the total amount of time needed for PCDM to solve the problem is equal to
\[T(\tau) = \lceil \tfrac{\tau}{p}\rceil \tfrac{ n}{\tau}\beta(\tau) c.\]

We can now ask the following natural question: what $\tau \in \{1,2,\dots,n\}$ minimizes $T(\tau)$? We now show that the answer is $\tau = p$.

\begin{theorem} \label{T_opt} Assume $f: \R^N\to \R$ is convex, partially separable of degree $\omega$, and has block Lipschitz gradient with constants $L_1,L_2,\dots,L_n>0$, where $n>1$. Further assume $\mu_F(L) = \mu_f(L) >0$ and consider the family of parallel coordinate descent methods with $\tau$-nice sampling, where $\tau\in \{1,2,\dots,n\}$, applied to problem \eqref{E_FaugLag}. Under the parallel computing model with $p \in \{1,2,\dots,n\}$ processors described above, the method with $\tau=p$ is optimal.
\end{theorem}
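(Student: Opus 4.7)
The plan is to reduce the minimization of $T(\tau)$ to a purely algebraic inequality after substituting $\beta(\tau)=1+\alpha(\tau-1)$ with $\alpha=(\omega-1)/(n-1)\in[0,1]$ (using $\omega\le n$, which holds because the degree of partial separability cannot exceed $n$). Discarding the factor $nc$, it suffices to minimize
\[
h(\tau)\eqdef\left\lceil\tfrac{\tau}{p}\right\rceil\frac{\beta(\tau)}{\tau}
\]
over $\tau\in\{1,\dots,n\}$, and to show $h(p)\le h(\tau)$ for all such $\tau$.

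First I would establish a monotonicity fact that drives everything: the function $\tau\mapsto \beta(\tau)/\tau$ is non-increasing on $\{1,\dots,n\}$. This is immediate from
\[
\frac{\beta(\tau)}{\tau}=\frac{1}{\tau}+\alpha-\frac{\alpha}{\tau}=\alpha+\frac{1-\alpha}{\tau},
\]
since $1-\alpha=(n-\omega)/(n-1)\ge 0$. With this in hand, the case $\tau\le p$ is trivial: then $\lceil\tau/p\rceil=1=\lceil p/p\rceil$, so $h(\tau)=\beta(\tau)/\tau\ge\beta(p)/p=h(p)$.

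For the case $\tau>p$, I would set $k\eqdef\lceil\tau/p\rceil\ge 2$, so that $(k-1)p<\tau\le kp$, and reduce the inequality $h(\tau)\ge h(p)$ to
\[
kp\,\beta(\tau)\ge \tau\,\beta(p).
\]
Substituting $\beta(\tau)=1+\alpha(\tau-1)$ and $\beta(p)=1+\alpha(p-1)$ and simplifying, this is equivalent to
\[
(kp-\tau)(1-\alpha)+\alpha p\tau(k-1)\ge 0,
\]
which holds term-by-term: $kp-\tau\ge 0$ by the definition of $k$, $1-\alpha\ge 0$ as above, and $k-1\ge 1>0$ since $\tau>p$.

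I do not expect any real obstacle; the only non-routine step is recognizing that the ceiling $\lceil\tau/p\rceil$ interacts cleanly with the linear form of $\beta$ because the monotonicity of $\beta(\tau)/\tau$ means the minimum over each interval $((k-1)p,kp]$ is attained at the right endpoint $\tau=kp$, reducing the comparison to the inequality displayed above. Conclude by combining the two cases and reinstating the constant factor $nc$.
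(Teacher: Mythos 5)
Your proposal is correct and follows essentially the same route as the paper: both arguments rest on the ceiling $\lceil \tau/p\rceil$ being constant on each interval $((k-1)p,kp]$ together with the monotonicity of $\beta(\tau)/\tau=\alpha+(1-\alpha)/\tau$ (which, as you make explicit, uses $\omega\le n$). The only cosmetic difference is that for $\tau>p$ you check the comparison with $\tau=p$ via one explicit algebraic inequality, whereas the paper first passes to the right endpoint $\tau=kp$ and then notes that $T(kp)=\tfrac{n}{p}\beta(kp)c$ is increasing in $k$.
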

\begin{proof} We only need to show that
\[p = \arg \min \{T(\tau)\;:\; \tau = 1,2,\dots,n\}.\]
 It is easy to see that $\tfrac{n}{\tau}\beta(\tau)$ is decreasing in $\tau$. Since $\lceil \tfrac{\tau}{p}\rceil$ is constant for $kp+1\leq \tau \leq kp$, it suffices to consider $\tau=kp$ for $k=1,2,\dots$ only. Finally, $T(kp) = \tfrac{n}{p}\beta(kp)c$ is increasing in $k$ since $\beta(\cdot)$ is increasing, and we conclude that $k=1$ and hence $\tau=p$ is optimal.
\end{proof}

\section{Numerical Results} \label{S_Numerical}

In this section we present two numerical experiments that support the findings of this paper. In both experiments we choose $f(x) = \frac{1}{2}\|b-Ax\|^2$ and $\Psi \equiv 0$.

The first experiment considers the performance of DQAM and the fully parallel variant of PCDM in the above setting where we know that the two methods coincide up to he selection of the stepsize parameters $\omega$ and $\theta$ (recall Section~\ref{S_97d070dsds}). Here we focus on  comparing the effects of using the DQAM stepsize $\theta = 1/(2(\omega-1))$ versus the larger  PCDM stepsize $\theta = 1/\omega$.

The second experiment compares DQAM, fully parallel variant of PCDM (i.e., PCDM used with $n$-nice sampling) and PCDM used with $\tau$-nice sampling, in the situation when the number of available processors is $\tau$, while varying $\omega$ (degree of partial separability of $f$) and $\tau$.

% The third experiment tests DQAM and PCDM on a two-stage stochastic optimization problem.

\subsection{Impact of the different stepsizes of DQAM and PCDM}

%Section~\ref{S_97d070dsds} shows that DQAM is equivalent to the fully parallel variant of PCDM in the special case when $f$ is quadratic, $\Psi \equiv 0$ and the DQAM stepsize is set to $\theta = 1/\omega$. In this experiment we assume this setup and compare the performance of DQAM (with the standard stepsize $\theta = 1/2(\omega-1)$) and fully parallel PCDM (with standard stepsize $\theta = 1/\omega$), for varying values of $\omega$.

 Suppose that $A$ has primal block angular structure
\begin{equation*}
  A = \begin{bmatrix}
    C \\ D
  \end{bmatrix} = \begin{bmatrix}
    C_1 &  & \\
    & \ddots & \\
    & &  C_n\\
    D_1 & \dots  & D_n
  \end{bmatrix},
\end{equation*}
where $C_i, D_i$ are matrices of appropriate sizes. Notice that when $D = 0$, the problem is partially  separable of degree $\omega = 1$ (i.e., it is fully separable) with respect to the natural block structure (i.e., blocks corresponding to the column submatrices $[C_i; 0; D_i]$). If $D$ is completely dense, the problem is nonseparable ($\omega = n$). In general, the degree of separability of $f$ is equal to the number of matrices $D_i$ that contain at least one nonzero entry.

In this (small scale) experiment we set $n = 100$ and let $C_1,\dots,C_{100}$ be 10\% dense matrices of size $150 \times 100$. Subsequently, $A$ is a $15,001 \times 10,000$ sparse matrix. The degree of separability of $f$ varies, and is controlled by setting a subset of the matrices $D_1,\dots,D_n$ to zero.

Twenty five random pairs $(A,b)$ were generated for each $\omega \in \{2,4,8,16,32\}$, and DQAM and fully parallel variant of DQAM were applied to each problem instance. A stopping condition of $f(x) \leq 10^{-4}b^T b$ was employed; the results of this experiment are presented in Figure~\ref{Stepsize:pdf}. All data points are averages over 25 runs.

\begin{figure}[h!]\centering
  \includegraphics[width=10cm]{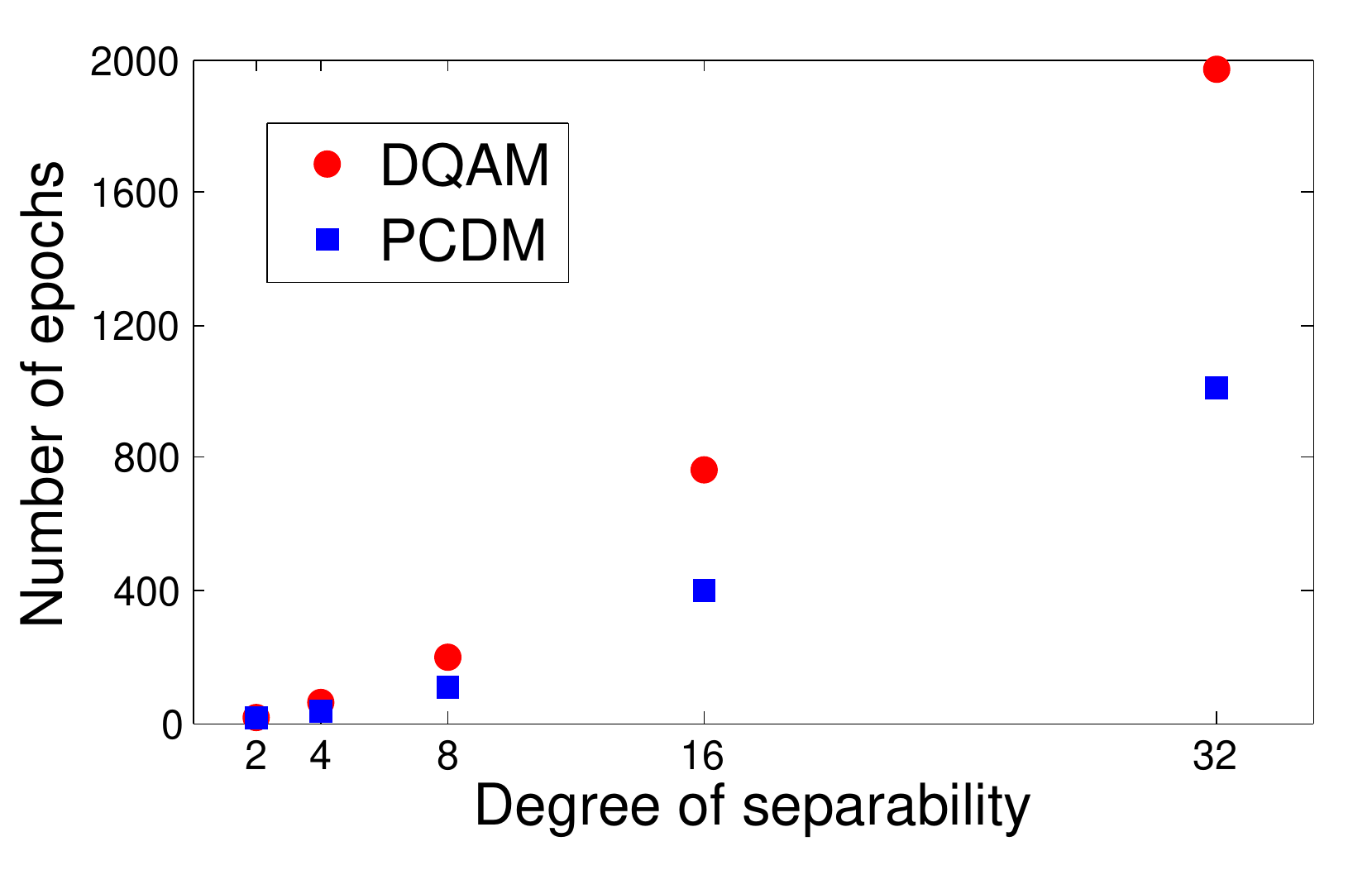}
  %\label{Eps_vs_omega}
   \label{Stepsize:pdf}
  \caption{This plot shows the number of epochs (a full sweep through the data, i.e., all $i=1,\dots,n$ blocks of $x$ are updated in one epoch) needed to solve the problem as a function of the degree of separability $\omega$. }

\end{figure}

Notice that when $\omega=2$, DQAM and PCDM require the same number of epochs to solve the problem. This is because $\theta= 1/(2(\omega-1)) = 1/2 = 1/\omega$. Then as $\omega$ grows, PCDM performs far better than DQAM, requiring almost 50\% fewer epochs than DQAM.

\subsection{Comparison of full vs partial parallelization}
\label{ExperimentTau}

Recall that unlike DQAM, PCDM is able to update $\tau$ blocks at each iteration, for any $\tau$ in the set $\{1,2,\dots,n\}$, demonstrating  useful  flexibility of the algorithm. By PCDM($\tau$) we denote the variant of PCDM in which $\tau$ blocks are updated at each iteration, using a $\tau$-nice sampling. In this experiment we investigate the performance of DQAM, PCDM($n$) (which in the plots we refer to simply as PCDM) and PCDM($\tau$), for a selection of parameters $\tau$ (the number of processors), and $\omega$ (the degree of partial separability).

Let us call the time taken for all $\tau$ processors to update a single block, one ``time unit''. Then, after one time unit of PCDM($\tau$), new gradient information is available to be utilized during the next time unit, which is much earlier than if all $n$ blocks need to be updated in each iteration.
On the other hand, for DQAM and PCDM, one iteration corresponds to all $n$ blocks of $x$ being updated. Subsequently, if there are $\tau$ processors available, one iteration of DQAM or PCDM (one epoch) corresponds to $\lceil \frac{n}{\tau}\rceil$ time units. However, PCDM($\tau$) will need to perform more iterations than both DQAM and PCDM. When both of these factors are taken into account, we have shown in Theorem~??? that PCDM($\tau$) is optimal in terms of overall complexity if there are $\tau$ processors.
%If $n$ is not a multiple of $\tau$, then during every iteration of DQAM or PCDM, some processors will be idle.
%In the time it takes for DQAM and PCDM to complete one iteration, PCDM($\tau$) will have updated $\lceil \frac{n}{\tau}\rceil - \frac{n}{\tau}$ more blocks.

The purpose of this experiment is to investigate this phenomenon numerically. Further, let $A$ be a $2 \cdot 10^4 \times 10^4$ sparse matrix, with at most $\omega$ nonzero entries per row. Let the stopping condition be $f(x) \leq 10^{-4}b^Tb$. The experiment was run for three instances: $\omega = 20,60,100$, and for each $\omega$ and varying $\tau$, the average number of time units required by DQAM, PCDM and PCDM($\tau$) were recorded. The results are shown in Figure~\ref{Timeunits}.

\begin{figure}[h!]\centering
  \includegraphics[width=12cm]{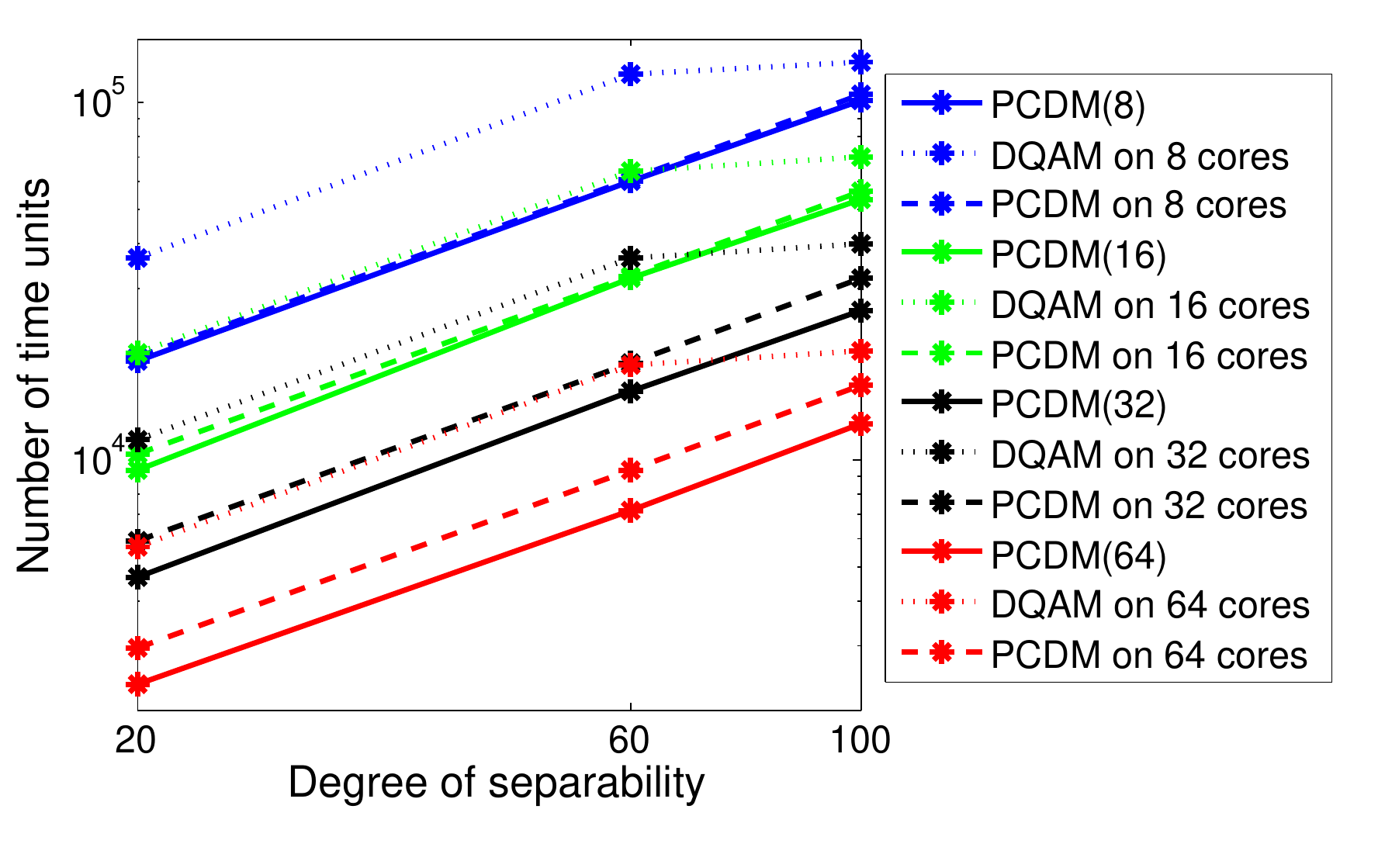}
   \caption{For each fixed $\tau\in \{8,16,32,64\}$, PCDM($\tau$) (solid line) is better than PCDM (dashed line), and noth are far better than DQAM (dotted line).}
  \label{Timeunits}
\end{figure}

The colors in Figure~\ref{Timeunits} correspond to different values of $\tau$. The solid lines correspond to PCDM($\tau$), while the dotted line (respectively dashed line) corresponds to DQAM (respectively PCDM) run with $\tau$ processors available. As $\omega$ increases, all algorithms require a higher number of time units. Further, as the number of available processors increases, the number of time units decreases. More importantly, for any fixed $\tau$, PCDM($\tau$), requires  far fewer time units than PCDM, and both require many fewer time units than DQAM. (Notice the log scale.) This demonstrates the practical advantage of `optimizing' PCDM($\tau$) to the number of available processors, as described in Section~7.4.

We have also recorded the average cpu time, and the resulting curves are visually indistinguishable from those in Figure~\ref{Timeunits}; only the scale of the vertical axis changes.

%\begin{figure}[h!]\centering
%  \includegraphics[width=12cm]{image03_cputimes.pdf}
%\caption{This figure shows the degree of separability versus the number of time units required to solve the problem described in Section \ref{ExperimentTau}. Again we see that as $\omega$ increases, the cputime required (in seconds) also increases, and as the number of available processors increases, the cputime required decreases.}
%  \label{Cputimes}
%\end{figure}
%
%Figure~\ref{Cputimes} shows the number of time units needed by each of the algorithms, as the degree of separability varies. The colours in the figure correspond to different values of $\tau$. The solid lines correspond to PCDM($\tau$), while the dotted line (respectively dashed line) corresponds to DQAM (respectively PCDM) run with $\tau$ processors available. The plot in Figure \ref{Cputimes}, mirrors the behaviour observed in Figure \ref{Timeunits}. As the degree of separability $\omega$ increases, all algorithms require more cputime before termination. Further, as the number of available processors $\tau$ increases, the cputime decreases. For fixed $\tau$, PCDM($\tau$), is slightly faster than PCDM, and both are much faster than DQAM.
%
%
%

\bibliography{ref}

\newpage
\appendix

\section{Notation Dictionary}

For the reader interested in comparing our work with the paper \cite{Ruszczynski95} directly, we have included a brief dictionary translating some of the key notation (Table~1).

\begin{table}[!h]\label{table:notation}
\caption{Notation dictionary.}
\begin{center}
\begin{tabular}{|c | c| }
     \hline
     Ruszczy\'{n}ski \cite{Ruszczynski95} & This paper\\
     \hline
     $L$ & $n$\\
     $N$ & $\omega-1$\\
     $x_i$ & $x^{(i)}$\\
     $ \tilde{x}$ & $x$\\
     $x$ & $y$\\
     $x - \tilde{x}$ & $h = y-x$\\
     $\tau $& $\theta$\\
     $\rho $& $r$\\
     $\rho\alpha^2$ & $L'$\\
     $\gamma$ & $\mu_F(e)/2$\\
     $\frac{1}{2}r\|b - \sum_{i=1}^n A_ix_i\|_2^2$ & $f(x)$\\
     $f_i(x_i)-\langle A_i^T\pi, x_i \rangle$ & $\Psi_i(\xbi) \;\;(= g_i(x_i)-\langle A_i^T\pi, x_i \rangle)$\\
     $\Lambda(x)$ & $F(x) = f(x) + \Psi(x)$\\
     $\Lambda_i(x_i,\tilde{x})$ & $f(x+U_ih^{(i)}) + \Psi_i(y^{(i)})$ \\
     $\tilde{\Lambda}(x,\tilde{x})$ & $f(x) + \sum_{i=1}^n [f(x+U_ih^{(i)}) -f(x) ] + \Psi(x+h) $ \\
     \hline
\end{tabular}
\end{center}
\end{table}

\end{document}